\documentclass[preprint,12pt]{elsarticle}

\usepackage{amscd}
\usepackage{amsmath}
\usepackage{amsthm}
\usepackage{amsfonts}
\usepackage{graphicx}
\usepackage{amssymb}
\usepackage{color}
\usepackage{amsbsy}
\usepackage{graphicx}
\usepackage{amssymb,color,amsbsy}

\usepackage{float}
\usepackage{comment}

\usepackage[ruled]{algorithm2e}

\usepackage[matrix,arrow]{xy}
\usepackage{mathabx}   

\DeclareMathAlphabet{\mathpzc}{OT1}{pzc}{m}{it}

\usepackage{pgfpages}
\usepackage{tikz}
\usetikzlibrary{shapes.geometric}
\usetikzlibrary{decorations.pathmorphing}
\usetikzlibrary{arrows}
\usetikzlibrary{backgrounds}
\usetikzlibrary{positioning}
\usetikzlibrary{fit}
\usepackage{caption}
\usepackage{amsthm}

\usepackage{amsmath}

\usepackage[pagebackref=true, bookmarksopen=true,colorlinks=true, linkcolor=red,citecolor=blue]{hyperref}

\usepackage[capitalise]{cleveref}  

\usepackage{tikz-cd}

\usepackage{tkz-graph}

\usepackage{multicol}

\usepackage{subcaption}

\global\long\def\ñ{\sim}%

\newtheorem{theorem}{Theorem}[section]

\newtheorem{corollary}[theorem]{Corollary}

\newtheorem{lemma}[theorem]{Lemma}

\newcommand{\perm}[1]{\operatorname{perm}\left({#1}\right)}

\newcommand{\Sachs}[1]{\operatorname{Sachs}(#1)}

\newcommand{\fp}[1]{\operatorname{SD}(#1)}
\newcommand{\ke}[1]{\operatorname{KE}(#1)}

\newcommand{\Rea}[2]{R(#1,#2)}

\begin{document}

\begin{abstract}
	In this work it is shown that the SD-KE decomposition is multiplicative under determinantal-type functions for graphs with perfect matchings, providing a new tool for the study of unimodular and singular matchable graphs.	
\end{abstract}

\begin{keyword}
Perfect matching
\sep
K\H{o}nig-Egerv\'{a}ry graphs
\sep
Sterboul-Deming graphs
\sep
determinantal decomposition 
\sep
Sachs subgraphs.
	
	\MSC 15A09, 05C38
\end{keyword}

\begin{frontmatter}
	
	\title{Generalized Edmonds-Sterboul-Deming configurations\\ Part 3:  Determinantal multiplicativity of the SD-KE decomposition of matchable graphs}
	
	\author[pan,daj]{Daniel A. Jaume}
	\ead{djaume@unsl.edu.ar}
	
	\author[pan,daj]{Diego G. Martinez}
	\ead{dgmartinez@unsl.edu.ar}
	\author[pan]{Cristian Panelo}
	\ead{crpanelo@unsl.edu.ar}
	
	\author[pan,daj]{Kevin Pereyra}
	\ead{kdpereyra@unsl.edu.ar}
	
	\address[pan]{Universidad Nacional de San Luis, Argentina.}
	\address[daj]{IMASL-CONICET, Argentina.}
	
	\date{Received: date / Accepted: date}

\end{frontmatter}
%
%
%

\section{Introduction}

	In 1985, Godsil \cite{godsil1985inverses} proved that the determinant of trees with perfect matchings is always $\pm 1$, depending on the parity of the matching number. Using the same technique, Yang and Ye \cite{yang2018inverses} extended Godsil's result in 2018 to bipartite graphs with a unique perfect matching. More recently, in 2025, Jaume, Martinez and Panelo~\cite{jaume2025determinantal} introduced a new tool, the notion of $M\!S$-alternating paths, which allowed them to generalize these earlier results to the setting of arbitrary graphs with a unique perfect matching.  
	
	In \cite{jaume2025determinantal}, it was also introduced a decomposition of graphs with a unique perfect matching into barbell subgraphs and K\H{o}nig--Egerv\'{a}ry graphs. This decomposition was later extended to general graphs by Jaume and Molina~\cite{jaumeSDKE2025} through a generalization of the classical flower and posy configurations of Edmonds \cite{edmonds1965paths}, Sterboul \cite{sterboul1979characterization}, and Deming \cite{deming1979independence}, as developed by Jaume, Panelo and Pereyra in \cite{kevin2025posy}.
	
	The SD--KE decomposition splits any graph into two disjoint subgraphs: a K\H{o}nig--Egerv\'{a}ry (KE) part and a Sterboul--Deming (SD) part, where every vertex lies in either a flower or a posy configuration. This decomposition is additive with respect to both the matching number and the independence number.  
	
	Our main contribution is to show that the SD--KE decomposition, when applied to graphs with a perfect matching, induces a multiplicative factorization of the determinant of the adjacency matrix. Moreover, this factorization provides a new approach to two outstanding problems: the characterization of unimodular graphs \cite{akbari2007unimodular} and the structural characterization of singular graphs \cite{von1957spektren}.  
	
	A central notion in this work is that of $M\!S$-alternating paths. We analyze how these paths are constrained by the presence of generalized configurations. Our results show that the flexibility of Jflowers and Jposies---in contrast to their classical counterparts---enables more natural composition and factorization arguments, which are crucial in the determinantal setting.  
	
	This paper is organized as follows. In \cref{Sec_Preliminaries}, we introduce notation and recall key concepts from the SD--KE decomposition. In \cref{Sec_Matching}, we develop the structural study of perfect matchings. Using a reachability lemma, we gain insight into the interplay between matchings and SD-graphs. Furthermore, we provide a polynomial-time algorithm for computing the SD--KE decomposition of graphs with a perfect matching. Finally, in \cref{Main}, we build on these developments to present our main result.

\section{Notation and preliminaries}\label{Sec_Preliminaries}

	For all graph-theoretic notions not defined here, the reader is referred to \cite{diestel2000graph}. All graphs in this work are simple: finite, undirected, without loops, and without multiple edges.
	
	Let $G$ be a graph. The vertex set and the edge set of $G$ are denoted by $V(G)$ and $E(G)$, respectively. The order of $G$ is $|G| := |V(G)|$. If $\{u,v\} \in E(G)$, we write $uv$ instead of $\{u,v\}$.
	
	If $H$ is a subgraph of $G$, we write $H \leq G$. If $X \subset V(G)$, then $G[X]$ denotes the subgraph of $G$ induced by $X$. If $F \leq G$, then $G - F$ is the subgraph of $G$ with vertex set $V(G) - V(F)$ and edge set $E(G) - E(F)$. We also write $G - e$, $G - v$, and $G - H$ for the subgraphs of $G$ obtained by deleting an edge $e \in E(G)$, a vertex $v \in V(G)$, or a subgraph $H \subseteq G$, respectively. 
	
	A \textit{matching} $M$ in $G$ is a set of pairwise nonadjacent edges. A vertex $v \in V(G)$ is \textit{saturated} by $M$ if $v$ is an endpoint of some $e \in M$; otherwise, $v$ is \textit{unsaturated}. A matching defines an involution in $G$: $M(v) = u$ if $uv \in M$, and $M(v) = v$ if $v$ is unsaturated by $M$. The \textit{matching number} of $G$, denoted $\mu(G)$, is the maximum cardinality of a matching of $G$.
	
	A matching $M$ is \textit{perfect} if every vertex is matched, i.e., $M(v) \ne v$ for all $v \in V(G)$. A graph with perfect matching is called \emph{matchable}, see \cite{lucchesi2024perfect}.

	A \textit{walk} in $G$ is a sequence of vertices $v_1 v_2 \cdots v_k$ such that $v_iv_{i+1} \in E(G)$ for all $i \in \{1, \dots, k-1\}$. Repetition of vertices and edges is allowed. A \textit{path} in $G$ is a walk in which all vertices are distinct. Given two paths $P = u \cdots v$ and $Q = w \dots x$, their concatenation  (if $vw \in E(G)$) is written $PQ := u \cdots v w \cdots x$. The ``inverse'' path from \(v\) to \(u\) is denoted by $P^{-1}$. If $G$ is connected and $u, v \in V(G)$, we write $uPv$ to denote a path between $u$ and $v$. 
	
	A \textit{cycle} in $G$ is called \textit{even} if it has an even number of edges. An \textit{even alternating cycle} with respect to a matching $M$ is an even cycle whose edges alternate between belonging and not belonging to $M$.

	Let $M$ be a matching in $G$. A path (or walk) is called \textit{alternating} with respect to $M$ if, for each pair of consecutive edges in the path, exactly one of them belongs to $M$. If the matching is clear from context, we simply say that the path is alternating. Given an alternating path (or walk) $P$ in a graph with a unique perfect matching, we say that $P$ is: \textit{\(M\)-\(mm\)-alternating} if it starts and ends with edges in $M$, \textit{\(M\)-\(nn\)-alternating} if it starts and ends with edges not in $M$, \textit{\(M\)-\(mn\)-alternating} if it starts with an edge in $M$ and ends with one not in $M$, and \textit{\(M\)-\(nm\)-alternating} if it starts with an unmatched edge and ends with a matched edge.
	
	A graph $G$ is a \emph{K\H{o}nig-Egerv\'ary graph} if $\alpha(G) + \mu(G) = |G|$, or equivalently, if $\tau(G) = \mu(G)$, where $\alpha(G)$ is the independence number of \(G\) and $\tau(G)$ is the covering number of \(G\). 
	
	Edmonds \cite{edmonds1965paths} defines the following configurations for a graph \( G \) relative to a matching \( M \). An \( M \)-\emph{blossom} is an odd cycle of length \( 2k + 1 \) in \( G \) that contains exactly \( k \) edges from \( M \). The \emph{base} of a blossom is the unique vertex that is not matched by \( M \) to another vertex within the blossom. Sterboul in \cite{sterboul1979characterization} and Deming in \cite{deming1979independence} introduced a configurations called  \textit{posy}. 	Given a graph \(G\) and a maximum matching \(M\) of \(G\) an \(M\)-posy is the configuration consisting of two different (not necessarily disjoint) \(M\)-blossoms joined by an odd length $mm$-alternating path, with respect $M$, such that the endpoints of the $mm$-alternating path are the bases of the two \(M\)-blossoms. The notion of Jposies was introduced in \cite{kevin2025posy}. An $M$-Jposy is a configuration consisting of two (not necessarily distinct) \( M \)-blossoms joined by an odd-length \( M \)-alternating walk such that the endpoints of the walk are the bases of the two \( M \)-blossoms. In \cref{fig:Jposy}, a maximum matching \( M \) of the graph \( G \) is shown in red. With respect to this matching, there are several \( M \)-Jposies in \( G \). The one highlighted in cyan in \cref{fig:Jposy} is formed by the blossom \( 9,5,4,9 \), whose base is vertex \( 9 \) (appearing twice), and the \( M \)-alternating walk\footnote{When we label the vertices of a graph with integers, we write paths and walks using commas.}:
	\[
	9,8,5,4,3,2,0,1,2,3,4,5,1,0,6,7,8,9.
	\]

	\begin{figure}[H]
		\centering
		\def \Vescala{1}
		\def \opacidad{0.2}
		\begin{tikzpicture}[commutative diagrams/every diagram,thick,scale=0.6] 
			
			\node[draw,black,fill,circle,scale=0.5,label=below:\small{$0$}] (0) at (-2,1) {};
			\node[draw,black,fill,circle,scale=0.5,label=above:\small{$1$}] (1) at (-4,2) {};
			\node[draw,black,fill,circle,scale=0.5,label=left:\small{$2$}] (2) at (-5,1) {};
			\node[draw,black,fill,circle,scale=0.5,label=left:\small{$3$}] (3) at (-5,-1) {};
			\node[draw,black,fill,circle,scale=0.5,label=below:\small{$4$}] (4) at (-4,-2) {};
			\node[draw,black,fill,circle,scale=0.5,label=left:\small{$5$}] (5) at (-3,0) {};
			\node[draw,black,fill,circle,scale=0.5,label=above:\small{$6$}] (6) at (-1,3) {};
			\node[draw,black,fill,circle,scale=0.5,label=right:\small{$7$}] (7) at (0,2) {};
			\node[draw,black,fill,circle,scale=0.5,label=below:\small{$8$}] (8) at (0,0) {};
			\node[draw,black,fill,circle,scale=0.5,label=below:\small{$9$}] (9) at (-2,-1) {};
			\node[draw,black,fill,circle,scale=0.5,label=below:\small{$10$}] (10) at (2,0) {};
			\node[draw,black,fill,circle,scale=0.5,label=right:\small{$11$}] (11) at (2,2) {};
			
			\node[opacity=0] (phantom) at (0,-3) {};
			
			\foreach \from/\to in {1/2,1/5,3/4,0/6,7/8,9/5,9/4,0/5,5/8,2/0,10/8} {
				\path[draw] (\from) -- (\to);
			}
			\foreach \from/\to in {2/3,4/5,1/0,6/7,8/9,{10/11}} {
				\path [draw, decorate, decoration={snake,amplitude=4,segment length=1.5mm, amplitude=2}, very thick, red] (\from) -- (\to);
				}
			\draw[orange, opacity=2*\opacidad, line width=10pt, rounded corners] (9) -- (5) -- (4) -- (9) -- (8) -- (5)-- (4)-- (3)-- (2)-- (0)-- (1)-- (2)-- (3)-- (4)-- (5)-- (1)-- (0)-- (6)-- (7)-- (8)-- (9);
		\end{tikzpicture}
		\caption{\(M\)-Jposy} \label{fig:Jposy}
	\end{figure}
	
	\begin{lemma}[\cite{kevin2025posy}]\label[lemma]{lem:noKE1}
		If \( G \) contains a $M$-Jposy, for \( M \in \mathcal{M}(G) \), then \( G \) is not a K\H{o}nig–Egerv\'{a}ry graph.
	\end{lemma}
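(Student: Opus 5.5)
The plan is to argue by contradiction. Assume $G$ is K\H{o}nig--Egerv\'ary, so $\tau(G)=\mu(G)$, and let $C$ be a minimum vertex cover with $|C|=|M|$, where $M\in\mathcal{M}(G)$ is a maximum matching. Since $M$ consists of $|M|$ pairwise disjoint edges and each must be covered by $C$, every edge of $M$ contains \emph{exactly one} vertex of $C$. Also, no vertex outside $V(M)$ lies in $C$. The goal is to show that an $M$-Jposy forces some edge of $M$ to have both endpoints in $C$, or neither.

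Step 1 is a parity propagation along alternating walks. Take an $M$-alternating walk $v_0v_1\cdots v_k$. Across a matched edge $v_iv_{i+1}\in M$, exactly one endpoint is in $C$, so membership in $C$ flips. Across a non-matched edge, the edge must still be covered, so if $v_i\notin C$ then $v_{i+1}\in C$. In particular, along an alternating segment $x\,y\,z$ with $xy\notin M$ and $yz\in M$, we get: $x\notin C$ implies $y\in C$, which implies $z\notin C$. Thus ``not in $C$'' propagates forward through each pair (non-matched edge, matched edge).

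Step 2 treats a blossom with base $b$. Let the odd cycle be $b=u_0u_1\cdots u_{2k}u_0$, where $u_1u_2,\dots,u_{2k-1}u_{2k}\in M$ and the two edges at $b$ are non-matched. I claim $b\in C$. If $b\notin C$, then $u_1\in C$, and the matched edge $u_1u_2$ gives $u_2\notin C$. Then $u_3\in C$, and so on, ending with $u_{2k}\notin C$. But $u_{2k}b$ is an edge with neither endpoint in $C$, a contradiction. So every blossom base lies in $C$, and by the same propagation the vertices of the blossom are forced.

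Step 3 uses the Jposy walk $W=b_1 w_1\cdots w_{m-1} b_2$. It has odd length, is $M$-alternating, and joins the bases. In the posy setting its first and last edges are matched ($mm$), which I will verify against the definition, allowing $b_1=b_2$. Traverse $W$ from $b_1\in C$. The first edge is matched, so $w_1\notin C$. The next edge is non-matched, so its other endpoint is in $C$; then a matched edge sends us out of $C$ again. Inductively, after each matched edge we are outside $C$. The walk ends with a matched edge, so $b_2\notin C$, contradicting Step 2. Repeated vertices cause no problem, since the propagation only uses local edge constraints.

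The main obstacle is the bookkeeping of parity along a walk rather than a path, together with confirming that the Jposy walk is $mm$-alternating. Odd length alone does not settle this, because an alternating walk of odd length could be $nn$. If the walk were $nn$, the argument would instead start at $b_1$ with a non-matched edge, and I would handle it by prepending the blossom's matched structure so as to reduce to the $mm$ case. Either way the contradiction comes from the forced flips along matched edges.
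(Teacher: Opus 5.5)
The paper gives no proof of this lemma; it is imported from \cite{kevin2025posy}. So your proposal has to stand on its own, and its core does. Since $|C|=|M|$, every matched edge has exactly one end in $C$. Your Step~2 correctly forces every blossom base into $C$. Along an $mm$-alternating walk starting at $b_1\in C$, the invariant ``in $C$ before each matched edge, out of $C$ after it'' forces $b_2\notin C$, which is a contradiction. You also handle repeated vertices and the case $b_1=b_2$ correctly. One aside in Step~2 is false: the other blossom vertices are not ``forced''. For a triangle $abc$ with $bc\in M$, both $\{a,b\}$ and $\{a,c\}$ are covers. You never use this claim, so simply delete it.

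The real problem is your contingency plan for an $nn$ walk. It cannot be carried out, because the statement is false under that reading. Take the triangle $abc$ with a pendant vertex $d$ attached at $a$, and let $M=\{ad,bc\}$. Then $abc$ is an $M$-blossom with base $a$, and $a,b,c,a$ is an odd-length $M$-alternating ($nn$) walk joining that base to itself. Yet $\alpha(G)=\mu(G)=2=|G|-2$, so $G$ is K\H{o}nig--Egerv\'ary.

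``Prepending the blossom'' fails for a structural reason. The base is exactly the vertex with no matched edge inside its blossom. So every traversal of the blossom from the base starts with an unmatched edge, and it cannot supply the missing matched edge.

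The correct resolution is definitional. The Jposy walk must be $mm$-alternating, as in the Sterboul--Deming posy and as in the example of \cref{fig:Jposy}, whose walk starts and ends with the matched edge $89$. The paper's phrase ``odd-length $M$-alternating walk'' has to be read that way. Take the $mm$ condition explicitly as part of the hypothesis and drop the fallback; with that change your argument is a complete proof.
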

	
	Let $G$ be a matchable graph. Define $V_{SD}(G)$ as the set of vertices of $G$ that belong to an $M$-Jposy for some $M \in \mathcal{M}(G)$. Following the decomposition introduced in \cite{jaumeSDKE2025}, the \emph{SD-KE decomposition} of $G$ consists of two subgraphs, denoted by $\fp{G}$ and $\ke{G}$, where
	\[
	\fp{G} = G[V_{SD}(G)] \quad \text{and} \quad \ke{G} = G - \fp{G}.
	\]
	We refer to $\fp{G}$ as the \emph{SD-part} of $G$ and to $\ke{G}$ as the \emph{KE-part} of $G$. A graph $G$ is called a \emph{Sterboul--Deming graph}, or \emph{SD-graph} for short, if $\ke{G} = \emptyset$. The sets  \(V_{SD}(G)\) and \(V_{KE}(G):=V(\ke{G})\) are called the \emph{SD-KE partition} of \(G\) (even though one of them could be empty).

	A spanning subgraph $S$ of $G$ is called a \emph{Sachs subgraph} of $G$ if every component of $S$ is either $K_2$ or a cycle. Let $\Sachs{G}$ denote the set of all Sachs subgraphs of $G$. If $G$ has a perfect matching, then for every $M \in \mathcal{M}(G)$ we have $M \in \Sachs{G}$. In particular, if $G$ has a perfect matching, then $|\Sachs{G}| \geq 1$. The following is a classical result.

	\begin{theorem}[\cite{harary1962determinant}]
		Let $G$ be a with perfect matching graph. Then:
		\begin{align*}
			\det(G)  &= \sum_{S \, \in \, \Sachs{G}}(-1)^{\kappa_e(G)} 2^{c(S)} ,\\
			\perm{G} &= \sum_{S \, \in \, \Sachs{G}} 2^{c(S)}.
		\end{align*}
		where $c(S)$ denotes the number of induced cycles of $S$, and $\kappa_{e}(S)$ denotes number of even connected components of \(S\).
	\end{theorem}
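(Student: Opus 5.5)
This is the classical Harary--Sachs expansion. The plan is to expand the determinant (and the permanent) of the adjacency matrix $A=A(G)$ over permutations and group the surviving terms according to the Sachs subgraphs they induce. Write $n=|G|$ and
\[
\det(A)=\sum_{\sigma\in S_n}\operatorname{sgn}(\sigma)\prod_{i=1}^{n}a_{i\sigma(i)},
\qquad
\perm{A}=\sum_{\sigma\in S_n}\prod_{i=1}^{n}a_{i\sigma(i)}.
\]
Since $G$ has no loops, a product is nonzero (and then equals $1$) exactly when $i\sigma(i)\in E(G)$ for every $i$. So $\sigma$ has no fixed points. Its $2$-cycles $(i\,j)$ correspond to edges $ij$, and its cycles of length at least $3$ correspond to directed cycles of $G$.

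The first step is to define the map from such permutations to subgraphs. Send $\sigma$ to the spanning subgraph $S_\sigma$ whose components are the edges $ij$ (one for each transposition) and the underlying undirected cycles (one for each longer cycle of $\sigma$). By construction $S_\sigma\in\Sachs{G}$, and every Sachs subgraph arises this way. The second step is to count fibres. For a given $S$, each $K_2$ component comes from a unique transposition. Each cycle component of length at least $3$ can be traversed in exactly two orientations, and these give distinct cyclic permutations. Hence $S$ has exactly $2^{c(S)}$ preimages, which already gives the permanent formula.

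The third step is the sign, which is the only point needing care. A cycle of length $\ell$ has sign $(-1)^{\ell-1}$, and the two orientations of a cycle have the same sign. Therefore every preimage of $S$ has the same sign, namely
\[
\prod_{\text{components } C \text{ of } S}(-1)^{|C|-1}.
\]
The $K_2$ components count as length $2$. Odd components contribute $+1$ and even components contribute $-1$, so the common sign is $(-1)^{\kappa_e(S)}$. Note that the exponent in the statement should be read as $\kappa_e(S)$ rather than $\kappa_e(G)$.

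The last step is to collect terms, which gives $\det(G)=\sum_{S}(-1)^{\kappa_e(S)}2^{c(S)}$. The hypothesis that $G$ is matchable is not needed for the identity. It only guarantees that $\Sachs{G}\neq\emptyset$, i.e.\ that the permanent is positive. The main obstacle is purely bookkeeping: one must check that the $2$-cycles are not double counted (a transposition has only one orientation) while genuine cycles are counted twice. That is exactly why the factor $2^{c(S)}$ counts only the cycle components.
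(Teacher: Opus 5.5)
Your argument is correct. The paper gives no proof of its own and simply cites Harary; your grouping of the nonzero permutation terms by their underlying Sachs subgraph, with $2^{c(S)}$ orientations per subgraph and common sign $(-1)^{\kappa_e(S)}$, is the standard argument for this expansion. You are also right that the exponent should read $\kappa_e(S)$ rather than $\kappa_e(G)$, and that the matchability hypothesis plays no role in the identity itself.
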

%
%
%
	
\section{Relevant structural  properties of matchable graphs}\label{Sec_Matching}

In this section, we assume that $G$ is a matchable graph, i.e \(G\) has at least a perfect matching. Let $v$ be a vertex of $G$ and let $M \in \mathcal{M}(G)$. We define the \emph{reachable set of $v$}, denoted by $\Rea{M}{v}$, as the set of vertices $u$ for which there exists an $M$-$mm$-alternating walk between $u$ and $v$ (see \cref{Figura1} and \cref{Figura5}).

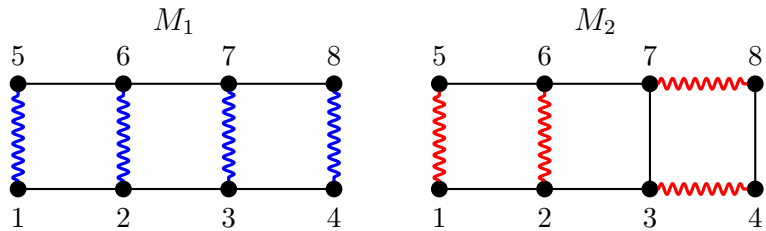
\begin{figure}[H]
	\centering
	\begin{tikzpicture}[commutative diagrams/every diagram,thick,scale=0.7] 
		
		
		\draw (2.5,3.5) node [anchor=north west][inner sep=0.75pt]    {$M_{1}$};
		
		\draw (10.5,3.5) node [anchor=north west][inner sep=0.75pt]    {$M_{2}$};
		
		\node[draw,black,fill,circle,scale=0.5,label=below:\small{$1$}] (1) at (0,0) { };
		\node[draw,black,fill,circle,scale=0.5,label=below:\small{$2$}] (2) at (2,0) { };
		\node[draw,black,fill,circle,scale=0.5,label=below:\small{$3$}] (3) at (4,0) { };
		\node[draw,black,fill,circle,scale=0.5,label=below:\small{$4$}] (4) at (6,0) { };
		\node[draw,black,fill,circle,scale=0.5,label=above:\small{$5$}] (5) at (0,2) { };
		\node[draw,black,fill,circle,scale=0.5,label=above:\small{$6$}] (6) at (2,2) { };
		\node[draw,black,fill,circle,scale=0.5,label=above:\small{$7$}] (7) at (4,2) { };
		\node[draw,black,fill,circle,scale=0.5,label=above:\small{$8$}] (8) at (6,2) { };
		
		\node[draw,black,fill,circle,scale=0.5,label=below:\small{$1$}] (11) at (8,0) { };
		\node[draw,black,fill,circle,scale=0.5,label=below:\small{$2$}] (21) at (10,0) { };
		\node[draw,black,fill,circle,scale=0.5,label=below:\small{$3$}] (31) at (12,0) { };
		\node[draw,black,fill,circle,scale=0.5,label=below:\small{$4$}] (41) at (14,0) { };
		\node[draw,black,fill,circle,scale=0.5,label=above:\small{$5$}] (51) at (8,2) { };
		\node[draw,black,fill,circle,scale=0.5,label=above:\small{$6$}] (61) at (10,2) { };
		\node[draw,black,fill,circle,scale=0.5,label=above:\small{$7$}] (71) at (12,2) { };
		\node[draw,black,fill,circle,scale=0.5,label=above:\small{$8$}] (81) at (14,2) { };
		
		\foreach \from/\to in {5/6,2/1,6/7,2/3,8/7,3/4,51/61,11/21,61/71,21/31,71/31,81/41} {
			\path[draw] (\from) -- (\to);}
		
		\foreach \from/\to in {1/5,2/6,7/3,8/4} {
			\path [draw, decorate, decoration={snake,amplitude=4,segment length=1.5mm, amplitude=2}, very thick, blue] (\from) -- (\to);
		}
		
		\foreach \from/\to in {11/51,21/61,71/81,31/41} {
			\path [draw, decorate, decoration={snake,amplitude=4,segment length=1.5mm, amplitude=2}, very thick, red] (\from) -- (\to);
		}

	\end{tikzpicture}
	\caption{The blue matching is $M_{1}$ and the red one is $M_{2}$. Note that $\Rea{M_1}{1} = \Rea{M_2}{1} = \{2,4,5,7\}$ and $\Rea{M_1}{2} = \Rea{M_2}{2} = \{1,3,6,8\}$}\label{Figura1}
\end{figure}

\begin{figure}[H]
	\centering
	\begin{tikzpicture}[commutative diagrams/every diagram,thick,scale=0.7] 
		
		
		\draw (3,4) node [anchor=north west][inner sep=0.75pt]    {$M_{1}$};
		
		\draw (11,4) node [anchor=north west][inner sep=0.75pt]    {$M_{2}$};
		
		\node[draw,black,fill,circle,scale=0.5,label=below:\small{$1$}] (1) at (0,0) { };
		\node[draw,black,fill,circle,scale=0.5,label=below:\small{$2$}] (2) at (2,0) { };
		\node[draw,black,fill,circle,scale=0.5,label=below:\small{$3$}] (3) at (4,0) { };
		\node[draw,black,fill,circle,scale=0.5,label=left:\small{$4$}] (4) at (0,2) { };
		\node[draw,black,fill,circle,scale=0.5,label=above right:\small{$5$}] (5) at (2,2) { };
		\node[draw,black,fill,circle,scale=0.5,label=above:\small{$6$}] (6) at (4,2) { };
		\node[draw,black,fill,circle,scale=0.5,label=above:\small{$7$}] (7) at (0,4) { };
		\node[draw,black,fill,circle,scale=0.5,label=above:\small{$8$}] (8) at (2,4) { };
		
		\node[draw,black,fill,circle,scale=0.5,label=below:\small{$1$}] (11) at (6+2,0) { };
		\node[draw,black,fill,circle,scale=0.5,label=below:\small{$2$}] (21) at (8+2,0) { };
		\node[draw,black,fill,circle,scale=0.5,label=below:\small{$3$}] (31) at (10+2,0) { };
		\node[draw,black,fill,circle,scale=0.5,label=left:\small{$4$}] (41) at (6+2,2) { };
		\node[draw,black,fill,circle,scale=0.5,label=above right:\small{$5$}] (51) at (8+2,2) { };
		\node[draw,black,fill,circle,scale=0.5,label=above:\small{$6$}] (61) at (10+2,2) { };
		\node[draw,black,fill,circle,scale=0.5,label=above:\small{$7$}] (71) at (6+2,4) { };
		\node[draw,black,fill,circle,scale=0.5,label=above:\small{$8$}] (81) at (8+2,4) { };
		
		\foreach \from/\to in {1/4,2/5,2/3,4/8,5/7,5/6,5/8,4/7,11/41,21/31,21/51,41/71,71/81,81/51,51/61,41/51} {
			\path[draw] (\from) -- (\to);}
		
		\foreach \from/\to in {1/2,4/5,6/3,8/7} {
			\path [draw, decorate, decoration={snake,amplitude=4,segment length=1.5mm, amplitude=2}, very thick, red] (\from) -- (\to);
		}
		
		\foreach \from/\to in {11/21,31/61,71/51,81/41} {
			\path [draw, decorate, decoration={snake,amplitude=4,segment length=1.5mm, amplitude=2}, very thick, blue] (\from) -- (\to);
		}

	\end{tikzpicture}
	\caption{The red matching is $M_{1}$ and the blue one is $M_{2}$. Note that $\Rea{M_1}{1} = \Rea{M_2}{1} = \{1,2,3,4,5,6,7,8\}$ and $\Rea{M_1}{2} = \Rea{M_2}{2} = \{1,2,3,4,5,6,7,8\}$}
	\label{Figura5}
\end{figure}
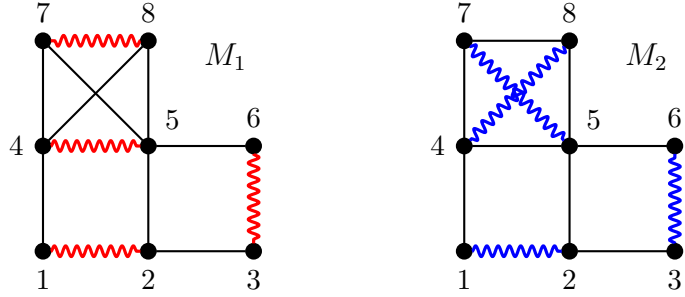

It is not a coincidence that in the examples of \cref{Figura1} and \cref{Figura5} we have $\Rea{M_1}{1} = \Rea{M_2}{1}$ and $\Rea{M_1}{2} = \Rea{M_2}{2}$. As we will see shortly, the set $\Rea{M}{v}$ does not depend on the choice of the perfect matching $M$. In contrast, for graphs without a perfect matching, the reachable set may depend on $M$; for instance, consider an odd cycle.

It is well known that the union of two perfect matchings, say $M_1$ and $M_2$, induces a graph whose connected components are either isolated edges or even cycles alternating between $M_1$ and $M_2$. From this fact, we deduce the following result.

\begin{lemma}[Reachability Lemma]\label[lemma]{asd1}
	Let $G$ be a matchable graph, and let $M_{1},M_{2} \in \mathcal{M}(G)$. Then $\Rea{M_1}{v} = \Rea{M_2}{v}$.
\end{lemma}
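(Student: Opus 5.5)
By the symmetry of the statement it suffices to prove $\Rea{M_1}{v}\subseteq\Rea{M_2}{v}$. The plan is to take an $M_1$-$mm$-alternating walk from $v$ to $u$ and rewrite it, edge by edge, into an $M_2$-$mm$-alternating walk with the same endpoints. The tool is the structure fact recalled just before the lemma: every component of $M_1\cup M_2$ is either an edge of $M_1\cap M_2$ or an even cycle alternating between $M_1$ and $M_2$.

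Fix $W=w_0w_1\cdots w_k$ with $w_0=v$, $w_k=u$ and $k$ odd. The edges $w_{2i}w_{2i+1}$ lie in $M_1$ and the edges $w_{2i+1}w_{2i+2}$ do not.
\begin{itemize}
\item If an $M_1$-edge $e=xy$ of $W$ also lies in $M_2$, keep it.
\item Otherwise $e$ lies on an $M_1/M_2$-alternating even cycle $C$ of $M_1\cup M_2$. Replace $e$ by the path $C-e$ from $x$ to $y$. This path has odd length, alternates, and starts and ends with $M_2$-edges, namely the $M_2$-edges at $x$ and at $y$.
\item Leave the non-$M_1$ edges of $W$ unchanged.
\end{itemize}
The resulting walk $W'$ runs from $v$ to $u$, and its first and last edges lie in $M_2$. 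Each replacement block is internally $M_2$-alternating. Each block is flanked by original non-$M_1$ edges, and each such edge touches $M_2$-edges at both ends.

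The main obstacle is the junctions. An original non-$M_1$ edge $f=w_{2i+1}w_{2i+2}$ is fine if $f\notin M_2$. If $f\in M_2$, then $W'$ has two consecutive $M_2$-edges there, and alternation fails. I will resolve this case by cancellation. Since $f\in M_2$, the $M_1$-edges $w_{2i}w_{2i+1}$ and $w_{2i+2}w_{2i+3}$ are not in $M_2$. They therefore lie on the same cycle $C$ of $M_1\cup M_2$, which contains $f$ as an $M_2$-edge. Hence the replacement path for $w_{2i}w_{2i+1}$ ends with the edge $w_{2i+2}w_{2i+1}$, that is, with $f$ traversed in the opposite direction. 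So $W'$ contains a backtrack $\cdots z\, w_{2i+1}\, z\cdots$ with $z=w_{2i+2}$, and I delete it, shortcutting $z\,w_{2i+1}\,z$ to $z$.

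Next I check alternation at $z$ after the deletion.
\begin{itemize}
\item The walk now enters $z$ along the previous edge of $C$, which is an $M_1$-edge, hence not in $M_2$.
\item It leaves $z$ along the first edge of the replacement path for $w_{2i+2}w_{2i+3}$, which is the $M_2$-edge at $z$.
\end{itemize}
So alternation holds at $z$. I will argue that these backtracks are the only defects, since every other junction joins an $M_2$-edge with a non-$M_2$-edge. After removing all of them we obtain an $M_2$-alternating walk from $v$ to $u$ that starts and ends with $M_2$-edges, which is what we need.

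The remaining checks are all local case analysis:
\begin{itemize}
\item The cancellations do not interfere with one another.
\item A cancellation never touches the first or last edge, because the first and last edges of $W$ are $M_1$-edges, not $f$'s.
\item The walk never degenerates to the trivial walk, since its two endpoints remain attached to $M_2$-edges.
\end{itemize}
Interchanging the roles of $M_1$ and $M_2$ gives the reverse inclusion, and hence $\Rea{M_1}{v}=\Rea{M_2}{v}$.
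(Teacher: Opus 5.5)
Your proof is correct and follows the route the paper indicates. The paper only invokes the structure of $M_1\cup M_2$ (common edges plus $M_1/M_2$-alternating even cycles) and says the lemma follows; your replacement of each $M_1\setminus M_2$ edge by the complementary arc of its cycle makes that argument explicit, and your cancellation of the backtrack at a non-$M_1$ walk edge lying in $M_2$ (valid because the adjacent blocks there have length at least $3$) supplies the junction detail the paper leaves unstated.
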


We need the following result from \cite{kevin2025posy}:
\begin{lemma}[\cite{kevin2025posy}]
	Let $G$ be a graph and let $M$ be a maximum matching of $G$. If $H$ is an $M$-Jposy of $G$, then for any $u,v \in V(H)$, there exists both an $M$-mm-alternating walk and an $M$-$nn$-alternating walk from $u$ to $v$.
\end{lemma}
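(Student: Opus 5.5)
The plan is to build one closed $M$-alternating walk that passes through every vertex of the $M$-Jposy $H$, and then read off both required walks from it. Write $H$ as two $M$-blossoms $B_1,B_2$ with bases $b_1,b_2$ (possibly $B_1=B_2$), joined by an odd $M$-alternating walk $W$ from $b_1$ to $b_2$.

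\textbf{Orientation of $W$.} First I will check that $W$ starts and ends with edges of $M$. In a blossom the two cycle edges at the base are both outside $M$. The base is still covered by $M$: I will use that $M$ is maximum to rule out an unsaturated base, since the odd alternating structure of $H$ would then yield an augmenting path (this is the one place maximality enters). So the base's matched edge leaves the blossom, and it must be the first edge of $W$. Since $W$ is alternating of odd length, its last edge is also in $M$.

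\textbf{Closed alternating walk.} Next I will regard each blossom $B_i$ as a closed walk from $b_i$ to $b_i$. Its first and last edges are non-$M$, and it alternates in between because it has odd length $2k+1$ with $k$ matched edges. Form
\[
C = W\,B_2\,W^{-1}\,B_1,
\]
read as a closed walk based at $b_1$. At every junction an $M$-edge meets a non-$M$-edge:
\begin{itemize}
\item at $b_2$, the end of $W$ (in $M$) meets the start of $B_2$ (not in $M$);
\item at $b_2$ again, the end of $B_2$ (not in $M$) meets the start of $W^{-1}$ (in $M$);
\item at $b_1$, the end of $W^{-1}$ (in $M$) meets the start of $B_1$ (not in $M$);
\item at $b_1$ when closing up, the end of $B_1$ (not in $M$) meets the start of $W$ (in $M$).
\end{itemize}
So $C$ is cyclically alternating, has even length, and visits every vertex of $H$. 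At any occurrence of a vertex $x$ on $C$, the two incident edges of $C$ are one $M$-edge and one non-$M$-edge. Hence walking forward or backward along $C$ from $x$ starts with an edge of either prescribed type.

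\textbf{Key claim.} For every $x\in V(H)$ and every $t\in\{m,n\}$, there is an $M$-alternating walk that starts at $x$ with an edge of type $t$ and ends at $b_1$ with an edge of $M$.
\begin{itemize}
\item Going forward along $C$ from $x$, we eventually arrive at $b_1$ through the last edge of $W^{-1}$, which lies in $M$.
\item Going backward, we may first arrive at $b_1$ through $B_1^{-1}$, via a non-$M$-edge. In that case we keep going along $W$, then $B_2^{-1}$, then $W^{-1}$, and arrive at $b_1$ through an $M$-edge. This is at most one extra lap, which is allowed since these are walks.
\end{itemize}
The case $x=b_1$ is handled the same way, using its occurrences on $C$.

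\textbf{Conclusion.} Given $u,v\in V(H)$ and desired end types $t_u,t_v$, take the walk $P_u$ from $u$ with first edge of type $t_u$, reaching $b_1$ by an $M$-edge, and the analogous walk $P_v$ for $v$ and $t_v$. Concatenate
\[
P_u\,B_1\,P_v^{-1}.
\]
At $b_1$ an $M$-edge meets the non-$M$ first edge of $B_1$, and the non-$M$ last edge of $B_1$ meets the $M$-edge that begins $P_v^{-1}$. So the concatenation is an alternating walk from $u$ to $v$ whose end types are $t_u$ and $t_v$. Choosing $t_u=t_v=m$ gives the $mm$-alternating walk, and $t_u=t_v=n$ gives the $nn$-alternating walk.

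The main obstacle is the bookkeeping at the junctions. In particular, I must verify that $W$ begins and ends with $M$-edges at the bases, which depends on how the paper's definition fixes the base and on the maximality of $M$. I must also check that the backward traversal really reaches $b_1$ through an $M$-edge after at most one more lap.
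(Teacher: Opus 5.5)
Your orientation step does not work as proposed, and it cannot be repaired by any argument from maximality of $M$. Both of its claims fail.

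\textbf{First claim (maximality forces a saturated base).} A blossom base can be exposed under a maximum matching, and an odd alternating walk from it need not give an augmenting path. Take a triangle $b,x,y$ with $M=\{xy\}$, which is maximum. Let $B_1=B_2$ be the triangle and $W=b,x,y,b$. Then $W$ is an odd $M$-alternating walk between the bases, and there is only one exposed vertex. Even with two distinct exposed bases, an alternating walk between them yields no augmenting path in general, because a walk can enter a blossom through a matched edge, go round it, and leave through the same matched edge with its parity flipped.

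\textbf{Second claim (a saturated base forces $W$ to start with its $M$-edge).} Add a leaf $p$ to the triangle with $bp\in M$, so $M$ is now perfect. The same $W$ still starts and ends with non-$M$ edges. In this graph there is no $mm$-alternating walk from $b$ to $b$, since the only $M$-edge at $b$ leads to a leaf. The graph is also K\H{o}nig--Egerv\'ary ($\alpha=\mu=2$, $|G|=4$).

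So under the literal reading ``odd-length $M$-alternating walk'', both this lemma and \cref{lem:noKE1} would be false. The fact that $W$ begins and ends with $M$-edges is therefore not something to prove. It is part of the definition of an $M$-Jposy in \cite{kevin2025posy}: it is inherited from the Sterboul--Deming posy, whose connecting path is $mm$-alternating, and the walk in \cref{fig:Jposy} has this form. State it as such. Once you do, maximality of $M$ is not used anywhere in the argument.

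With that correction, the rest of your proof is sound and complete:
\begin{itemize}
\item Each blossom, read from its base, is an alternating closed walk whose first and last edges are not in $M$.
\item $C=WB_2W^{-1}B_1$ is cyclically alternating.
\item Going forward from any occurrence of a vertex reaches the junction $W^{-1}\mid B_1$ through an $M$-edge.
\item Going backward reaches the junction $B_1\mid W$ through an $M$-edge, within at most one extra lap.
\item The splice $P_u B_1 P_v^{-1}$ produces both end types, including the degenerate cases $B_1=B_2$ and $u=v=b_1$.
\end{itemize}
The paper does not prove this lemma itself; it quotes it from \cite{kevin2025posy}, so there is no in-paper argument to compare routes with.
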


\begin{corollary}\label[corollary]{v_Posy_vv}
	Let $G$ be a graph and let $M$ be a maximum matching of $G$. Let $v \in V(\fp{G})$ be a vertex that belongs to an $M$-Jposy. Then, there exists a $M$-$mm$-alternating closed walk starting at $v$.
\end{corollary}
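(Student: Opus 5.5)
The corollary follows directly from the preceding lemma of \cite{kevin2025posy}, taken in the degenerate case where both endpoints coincide. Let $v$ lie in an $M$-Jposy $H$ of $G$, with $M$ a maximum matching. Apply that lemma with $u = v$, both taken in $V(H)$. It yields an $M$-$mm$-alternating walk $W$ from $v$ to $v$. Such a walk starts and ends at $v$, so it is a closed walk based at $v$. It starts and ends with edges of $M$, so it is $mm$-alternating. This is exactly what the corollary asserts.

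The only point to check is that the lemma really permits $u=v$ and returns a nontrivial walk, not the empty walk of length zero. A length-zero walk has no first or last edge, so it cannot be called $mm$-alternating; the lemma's conclusion therefore already forces at least one edge. To be safe, I would also give a direct construction from the Jposy structure.

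Write $H$ as blossoms $B_1, B_2$ with bases $b_1, b_2$, joined by the odd alternating walk $P$ from $b_1$ to $b_2$. Since $P$ has odd length and ends at the bases, it begins and ends with matched edges: the bases are $M$-saturated only outside their blossoms.

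Suppose first that $v$ lies on $B_1$. Go around $B_1$ from $v$ toward $b_1$ in the direction that leaves $v$ through its matched edge inside $B_1$; if $v=b_1$, leave instead through the first edge of $P$. The odd length of the blossom makes the two arcs of $B_1$ from $v$ to $b_1$ have opposite parities, so one of them arrives at $b_1$ through an unmatched edge. Continue along $P$ to $b_2$. Traverse $B_2$ completely, which flips parity and returns to $b_2$ through an unmatched edge. Return along $P^{-1}$ to $b_1$, and finally close up along the other arc of $B_1$ back to $v$. Checking the alternation at each junction shows that the closed walk both starts and ends with $M$-edges at $v$. The case $v \in B_2$ is symmetric.

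Now suppose $v$ is an interior vertex of $P$. Walk from $v$ along $P$ in the direction of its matched edge until reaching a base. Go around that blossom. Traverse $P$ to the other base, go around the other blossom, and return along $P$ to $v$. The walk re-enters $v$ through the same matched edge, by the parity argument.

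The main obstacle is purely bookkeeping of the alternation at the junctions between $P$ and the blossoms. I would handle it by the parity count just sketched. In practice, citing the lemma with $u=v$ suffices.
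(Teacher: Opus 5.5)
Your primary argument, which specializes the preceding lemma of \cite{kevin2025posy} to $u=v\in V(H)$, is correct. It is exactly how the paper obtains this corollary: the corollary is stated with no further proof right after that lemma. So the statement is established.

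The ``to be safe'' direct construction, however, is wrong as written, and you should fix it or drop it. An $M$-$mm$-alternating walk always has odd length. Each of your closed walks traverses $B_1$ once in total (its two arcs together), $B_2$ once, and $P$ twice in total. Its length is therefore $|B_1|+|B_2|+2|P|$, which is even.

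The junctions do alternate correctly, but the last edge re-enters $v$ through its unmatched edge. In the Jposy of \cref{fig:Jposy} with $v=5$, your walk $5,4,9,\dots,9,5$ closes with the unmatched edge $9\,5$, so it is $mn$, not $mm$. The same failure occurs for $v$ interior to $P$: you come back to $v$ along the edge of $P$ at $v$ that is not matched, so the claim that the walk ``re-enters $v$ through the same matched edge'' is false.

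The repair is to retrace your outgoing route instead of closing along a different one:
\begin{itemize}
\item For $v$ on $B_1$, take $A_1PB_2P^{-1}A_1^{-1}$, where $A_1$ is the arc of $B_1$ leaving $v$ through its matched edge. The return uses $A_1^{-1}$, not the other arc.
\item For $v$ on $P$, walk along $P$ through $v$'s matched edge to the base in that direction; this segment is $mm$ of odd length. Go once around that blossom, then retrace the same segment back to $v$.
\end{itemize}
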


\begin{theorem}\label{vwalk}
	Let $G$ be a matchable graph and let $M\in \mathcal{M}(G)$. Then, for each $v \in V(\fp{G})$, there exists an $M$-$mm$-alternating closed walk at $v$.
\end{theorem}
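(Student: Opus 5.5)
The plan is to reduce the statement to \cref{v_Posy_vv} via the Reachability Lemma (\cref{asd1}). Fix $v \in V(\fp{G})$ and $M \in \mathcal{M}(G)$. By definition of $V_{SD}(G)$, there is some $M' \in \mathcal{M}(G)$ such that $v$ lies on an $M'$-Jposy. Since $G$ is matchable, $M'$ is a maximum matching, so \cref{v_Posy_vv} gives an $M'$-$mm$-alternating closed walk starting at $v$. The remaining work is to transfer this closed walk from $M'$ to the prescribed matching $M$.

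For the transfer, read the closed walk at $v$ as an $M'$-$mm$-alternating walk from $v$ to $v$. This means $v \in \Rea{M'}{v}$. By \cref{asd1}, $\Rea{M'}{v}=\Rea{M}{v}$, so $v\in\Rea{M}{v}$. Hence there is an $M$-$mm$-alternating walk from $v$ to $v$, which is exactly an $M$-$mm$-alternating closed walk at $v$.

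The main obstacle is a point of interpretation: whether a walk from $v$ to itself is admissible in the definition of $\Rea{M}{v}$. If the reachability lemma is meant only for distinct endpoints, I would instead prove the case $u=v$ directly, repeating the argument behind \cref{asd1}. Let $W$ be an $M'$-$mm$-alternating closed walk at $v$, and consider $M\triangle M'$. Its components are isolated edges (common to $M$ and $M'$) and even cycles alternating between $M$ and $M'$.

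I then replace each $M'$-edge $xy$ of $W$ that is not in $M$ by an $M$-$mm$-alternating detour. This edge lies on an alternating cycle $C$ of $M\triangle M'$. Going from $x$ around $C$ to $y$ along the other arc, starting and ending with $M$-edges, gives an $M$-alternating path from $x$ to $y$ that begins and ends in $M$. The non-matching edges of $W$ are then used as $M$-non-edges, where we may assume they are not $M$-edges; otherwise consecutive $M$-edges would arise and must be shortcut.

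The delicate part is exactly this: a non-$M'$ edge of $W$ may be an $M$-edge. I would handle it by the standard parity argument in the proof of \cref{asd1}. Since $\Rea{}$ is matching-independent, I would simply cite \cref{asd1} with $u=v$ allowed, noting that the same symmetric-difference substitution proves it.
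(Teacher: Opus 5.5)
Your proposal is correct and is essentially the paper's proof: pick $M'$ with $v$ on an $M'$-Jposy, apply \cref{v_Posy_vv} to get $v\in\Rea{M'}{v}$, and transfer to $M$ via \cref{asd1}. The fallback symmetric-difference argument is unnecessary, because the definition of $\Rea{M}{v}$ does not exclude $u=v$, and the paper uses \cref{asd1} in exactly that case.
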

\begin{proof}
	Since $v\in V(\fp{G})$, there exists $M'\in \mathcal{M}(G)$ such that $v$ belongs to the vertices of a $M'$-Jposy. By \cref{v_Posy_vv}, we have that $v \in \Rea{M'}{v}$. By \cref{asd1}, we have that $\Rea{M'}{v} = \Rea{M}{v}$, i.e., there exists an $M$-$mm$-alternating closed walk starting at $v$.
\end{proof}

The following corollary shows that all SD-vertices of a matchable graph are ``marked'' by any perfect matching of $G$. 

Let $G$ be a matchable graph, $M \in \mathcal{M}(G)$, and $v \in V(\fp{G})$. By \cref{vwalk}, there exists an $M$–$mm$ alternating closed walk starting at $v$, and likewise an $M$–$mm$ alternating closed walk starting at $M(v)$.

\begin{corollary}\label[corollary]{asd12}
	Let $G$ be a matchable graph and let $M\in \mathcal{M}(G)$. Then, for each $v \in V(\fp{G})$ we have that $v$ is a vertex of an $M$-Jposy.
\end{corollary}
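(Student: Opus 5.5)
Given $v\in V(\fp{G})$ and an arbitrary $M\in\mathcal{M}(G)$, \cref{vwalk} gives an $M$-$mm$-alternating closed walk $W = v\,M(v)\cdots w\,v$ at $v$. Its first and last edges are $vM(v)$ and $wv$, and both lie in $M$. Since $M$ is a matching, $w=M(v)$, so the closed walk leaves $v$ and returns to $v$ along the same matched edge. I plan to turn $W$ into an $M$-Jposy containing $v$. The natural route is a parity and shortening argument on the alternating walk from $M(v)$ back to $M(v)$ obtained by deleting the two copies of $vM(v)$.

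First, set $W' = M(v)\cdots M(v)$, the inner part of $W$. This is an $M$-$nn$-alternating closed walk at $M(v)$, and it has odd length: an alternating walk that starts and ends with non-matching edges has odd length. I would then treat $W'$ in the standard Edmonds manner, viewing it as an alternating walk in the bipartite double cover, or simply labeling each vertex occurrence as even or odd according to its position along $W'$. An odd closed alternating walk cannot be properly $2$-coloured, so some vertex $x$ must occur at positions of both parities.

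Second, I take such a vertex $x$ with the two occurrences chosen as close as possible. The segment of $W'$ between them should then contain an odd cycle $C$ through $x$ that is an $M$-blossom with base $x$. The rest of $W'$, together with the edge $vM(v)$, should give an odd $M$-alternating walk from $x$ back to $x$, or to the base of a second blossom. This walk passes through $v$, and it has the $mm$ shape at the bases that the Jposy definition requires.

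The main obstacle is making this extraction rigorous. One must check that the minimal repeated segment really is a blossom with the correct matched and unmatched pattern at its base. One must also check that the connecting walk still contains $v$ after the shortening. Because Jposies allow walks and allow the two blossoms to coincide, I expect this to go through without the delicate disjointness bookkeeping that classical posies need.

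A cleaner alternative is worth trying first. By the Jposy lemma, the Jposy $H'$ witnessing $v\in V(\fp{G})$ for some $M'\in\mathcal{M}(G)$ comes with rich walk structure. I would then take the symmetric difference $M\triangle M'$, which is a disjoint union of even $M$/$M'$-alternating cycles. Each blossom of $H'$ and its connecting walk could be rerouted along these cycles, just as in the proof of the Reachability Lemma \cref{asd1}, to produce an $M$-Jposy through $v$.
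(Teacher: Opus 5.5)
\textbf{There is a genuine gap.} Your main route builds the Jposy from the single closed walk $W$ at $v$, and that cannot work. In $W$ the only edges at $v$ are the two copies of $vM(v)$. So in anything assembled from pieces of $W$, the vertex $v$ has three limitations:
\begin{itemize}
\item it cannot be an interior vertex of an alternating walk, since that needs an unmatched edge at $v$;
\item it is not a blossom base, since nothing in $W$ gives an odd cycle through $v$ with both $v$-edges unmatched;
\item it lies on your extracted blossom $C$ only if it happens to recur inside $W'$.
\end{itemize}
Concretely, your walk ``from $x$ through $v$ to $x$ or to a second base'' breaks at $v$. After $\cdots M(v)\,v$ the next edge must be an unmatched edge at $v$, and $W$ supplies none.

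This is not a technicality. Take $G$ on $\{v,a,b,c\}$ with edges $va,ab,ac,bc$ and $M=\{va,bc\}$. Then $v,a,b,c,a,v$ is an $M$-$mm$-alternating closed walk at $v$, yet $G$ has no $M$-Jposy. The only blossom is $abc$ with base $a$, so the connecting walk would have to start $a,v$. It cannot continue because $\deg v=1$, and $v$ is not a base. So the hypothesis $v\in V(\fp{G})$ must be used for more than producing one closed walk at $v$.

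\textbf{The missing idea.} The paper uses it in the sentence just before the corollary: apply \cref{vwalk} to $M(v)$ as well. This is legitimate because $M(v)\in V(\fp{G})$ by \cref{teo_nomathcingedge}, since $vM(v)\in M$ and $M$ is maximum. Your Edmonds extraction then finishes the job when applied to each closed walk separately:
\begin{itemize}
\item from $W$ you get a blossom $B$ with base $b$ and an even $nm$-alternating walk $Q$ from $M(v)$ to $b$, possibly trivial;
\item from the closed walk at $M(v)$ you get a blossom $B'$ with base $b'$ and an even $nm$-alternating walk $Q'$ from $v$ to $b'$.
\end{itemize}
Then $b'\,(Q')^{-1}\,v\,M(v)\,Q\,b$ is an odd $mm$-alternating walk joining the bases of $B'$ and $B$ and passing through $v$. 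This is an $M$-Jposy containing $v$.

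\textbf{A fix to the extraction itself.} Choosing the closest opposite-parity pair does force both edges at $x$ to be unmatched. It does not, however, exclude same-parity repetitions inside the segment. Shortcut those even closed sub-walks first, then take the first repetition.

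\textbf{Your alternative.} Rerouting along $M\triangle M'$ is only a sketch. Nothing in it explains why the blossoms of the $M'$-Jposy survive the rerouting as $M$-blossoms, so it does not close the gap.
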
 

For the KE-part of the graph in \cref{Figura1}, we have $\Rea{M_1}{1} \ne \{1,2,\dots,8\}$. In contrast, the graph in \cref{Figura5} has an empty KE-part, and in this case $\Rea{M_1}{1} = \{1,2,\dots,8\}$. The following theorem shows that this is always the case.

\begin{theorem}
	Let $G$ be a connected matchable graph and let $M \in \mathcal{M}(G)$. If $\ke{G} = \emptyset$, then for every $v \in V(G)$ we have
	\[
	\Rea{M}{v} = V(G).
	\]
\end{theorem}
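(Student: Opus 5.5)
The plan is to show that the relation ``$u \in \Rea{M}{v}$'' holds for every pair of vertices, using connectivity together with the closed walks supplied by \cref{vwalk}. Since $\ke{G}=\emptyset$, every vertex lies in $V(\fp{G})$. So by \cref{vwalk}, every vertex $w$ has an $M$-$mm$-alternating closed walk $C_w$ at $w$. Such a walk starts and ends with matched edges, and each of those is the edge $wM(w)$. In particular $w\in\Rea{M}{w}$.

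The key step is to show that for every edge $xy \in E(G)$ we have $y \in \Rea{M}{x}$. If $xy\in M$, the single edge is an $mm$-walk. If $xy\notin M$, take the walk $C_x$ traversed from $x$ and append the edge $x\,y$, then $y\,M(y)$, and then the closed walk $C_y$ read from $M(y)$ back... More precisely, I would concatenate as follows:
\[
x \xrightarrow{C_x} x \to y,
\]
where $C_x$ ends with the matched edge $M(x)x$ and $xy$ is unmatched, so alternation is preserved. From $y$ we then follow $C_y$, which begins with the matched edge $yM(y)$ and ends with the matched edge $M(y)y$. The resulting walk starts with a matched edge (the first edge of $C_x$), ends with a matched edge, and alternates throughout. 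Hence $y\in\Rea{M}{x}$.

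Next I need transitivity: if $u\in\Rea{M}{v}$ and $w\in\Rea{M}{u}$, then $w\in\Rea{M}{v}$. Concatenating two $mm$-walks at $u$ would create two consecutive matched edges, namely $M(u)u$ followed by $uM(u)$. To fix this, I would remove that back-and-forth pair: both edges are the same edge $uM(u)$, so deleting the visit to $u$ yields a walk through $M(u)$. However, the walk before $M(u)$ entered via an unmatched edge and leaves via an unmatched edge, which breaks alternation. Instead, I would insert the closed walk $C_{u}$ differently. The first walk ends $\dots a\,M(u)\,u$ with $aM(u)$ unmatched. I replace its final matched edge with an $nn$-type detour: the lemma from \cite{kevin2025posy} (via \cref{asd12}, $u$ lies in an $M$-Jposy) gives an $M$-$nn$-alternating walk between $u$ and $u$. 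So the walk ``first walk to $u$'', then the $nn$-closed walk at $u$, then ``second walk from $u$'' alternates correctly and starts and ends with matched edges. This gives transitivity.

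Combining these, connectivity lets me chain edges along a path $v=x_0x_1\cdots x_k=u$, applying the edge step and transitivity $k$ times, so $u\in\Rea{M}{v}$ and hence $\Rea{M}{v}=V(G)$. I expect the main obstacle to be the transitivity/gluing step. One has to make sure that the $nn$-walk is available relative to the \emph{given} matching $M$; this is exactly what \cref{asd12} provides, since $u$ lies in an $M$-Jposy for this $M$. With that in hand, the edge step could even be done directly with the Jposy walks in place of $C_x$ and $C_y$.
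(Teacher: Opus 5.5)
Your proof is correct, but it is organized differently from the paper's. The paper never proves transitivity. Instead it shows directly that $\Rea{M}{v}$ is closed under adjacency. It takes a vertex $x\notin\Rea{M}{v}$ with a neighbour $y\in\Rea{M}{v}$, where the edge $xy$ can be taken unmatched because $\Rea{M}{v}$ is closed under $M$. It then appends the edge $yx$ and the closed $mm$-walk at $x$ from \cref{vwalk} to an $mm$-walk from $v$ to $y$, and connectivity gives $\Rea{M}{v}=V(G)$. Every extension is made at the end of a walk that already starts at $v$, so the junction is always matched--unmatched--matched and only closed $mm$-walks are needed. Your route (reflexivity from \cref{vwalk}, the edge step, then transitivity) shows that $mm$-reachability is an equivalence relation in which adjacent vertices are related. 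That is more modular, but it needs an $nn$-alternating closed walk at each junction vertex $u$. This extra ingredient is cheaper than you suggest: you do not need \cref{asd12} or the Jposy lemma from \cite{kevin2025posy}. Deleting the first and last edges (both equal to $uM(u)$) of the closed $mm$-walk at $M(u)$ supplied by \cref{vwalk} already leaves an $nn$-alternating closed walk at $u$. Finally, the phrase ``replace its final matched edge'' is inaccurate. What you actually do, correctly, is \emph{insert} the $nn$-closed walk between the two walks, keeping both traversals of $uM(u)$.
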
 

\begin{proof}
	If there exists $x \in V(G) \setminus R(M,v)$ such that $x \in N(R(M,v))$,
	then, by \cref{asd12}, there exists $y \in R(M,v)$ with $xy \in E(G) \setminus M$.
	Consider an $M$–$mm$-alternating walk from $v$ to $y$, and then extend it through the edge $xy$. By \cref{vwalk}, this yields an $M$–$mm$ alternating closed walk at $x$.
	Hence $x \in R(M,v)$, a contradiction.
\end{proof}

We need the following results from \cite{jaumeSDKE2025}:

\begin{theorem}[\cite{jaumeSDKE2025}]\label{teo_nomathcingedge}
	Let $G$ be a graph, $u\in V(\fp{G})$ and $v\in V(\ke{G})$. If $uv\in E(G)$, then $uv\notin M$ for every maximum matching $M$ of $G$.
\end{theorem}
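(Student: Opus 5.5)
We must prove: if $u\in V(\fp{G})$, $v\in V(\ke{G})$ and $uv\in E(G)$, then $uv\notin M$ for every maximum matching $M$ of $G$. The plan is to argue by contradiction: if $uv\in M$, we build an $M$-Jposy through $v$, contradicting $v\notin V_{SD}(G)$. The only tools needed are the walk-existence lemma from \cite{kevin2025posy} and the definition of $V_{SD}$. (In the matchable setting of this paper, the same argument could start from \cref{asd12}.)

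\textbf{Step 1.} Suppose $uv\in M$ for some maximum matching $M$. Since $u\in V(\fp{G})$, there is a maximum matching $M'$ and an $M'$-Jposy $H$ with $u\in V(H)$. We first transfer the configuration to $M$. For matchable graphs, \cref{asd12} already gives an $M$-Jposy containing $u$. For general graphs, take the symmetric difference $M\triangle M'$. If $H$ meets its components, switching along an even alternating cycle or even alternating path reroutes alternating walks. Alternatively, and more cleanly, I would arrange the choice of matching from the start: we may assume $H$ is an $M$-Jposy. This is the point I expect to be the main obstacle, namely showing that SD-membership is witnessed by every maximum matching, in analogy with \cref{asd1}. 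In the paper's matchable context it follows directly from \cref{asd12}.

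\textbf{Step 2.} With $H$ an $M$-Jposy containing $u$, apply the lemma of \cite{kevin2025posy}. For the base $b$ of a blossom $B$ of $H$, there is an $M$-$nn$-alternating walk $W$ from $u$ to $b$. Since $uv\in M$ and $W$ begins at $u$ with a non-matching edge, the walk $vu\,W$ is $M$-alternating, starts with the matching edge $vu$, and ends at $b$ with a non-matching edge.

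\textbf{Step 3.} Build the Jposy. Concatenate $vuW$, then the odd-length $M$-alternating walk $P$ of $H$ from $b$ to the other base $b'$, and then the reverse of $vuW$ computed towards $b'$. More simply, use the odd $mm$ closed walk at $b$ through the blossom: it consists of an $mm$-walk from $b$ to $b$ of odd length, namely $P$ followed by the $mm$-walk back, which the lemma guarantees. This yields an $M$-alternating closed walk at $v$ that begins and ends with the matching edge $vu$, passing through the blossoms of $H$. Read with the same blossoms and the walk extended through $v$, this gives an $M$-Jposy whose vertex set contains $v$. Concretely, the walk $b\,W^{-1}\,u\,v\,u\,W\,b$ is alternating because the edge $uv$ is matched and both incident edges of $W$ at $u$ are unmatched. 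Splicing it into the odd walk of $H$ at $b$ preserves both oddness and the alternation at the bases.

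\textbf{Step 4.} Therefore $v\in V_{SD}(G)$, contradicting $v\in V(\ke{G})$. The parity and alternation checks at the splice points are routine.
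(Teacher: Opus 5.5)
The paper does not prove this theorem; it imports it from \cite{jaumeSDKE2025}. Your argument therefore has to stand on its own, and it has a genuine gap: everything nontrivial sits in Step~1, which you assume rather than prove. The hypothesis $u\in V(\fp{G})$ only gives a configuration for \emph{some} maximum matching $M'$, while $uv$ lies in a possibly different $M$. Ruling out exactly this mismatch is what the theorem asserts.

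For arbitrary graphs you give no argument: the remark about rerouting along $M\triangle M'$ is never carried out. The analogy with \cref{asd1} also fails there, because reachable sets may depend on the maximum matching when $G$ is not matchable (the paper's odd-cycle remark). Moreover, the SD-part of a non-matchable graph also contains vertices of flower-type configurations, which you never treat.

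In the matchable case, invoking \cref{asd12} is circular. The paper's only justification of \cref{asd12} applies \cref{vwalk} to the $M$-partner of an SD-vertex, which presupposes that this partner is itself in $V(\fp{G})$. For matchable $G$ the theorem you are proving is equivalent to ``$M(x)\in V(\fp{G})$ for all $x\in V(\fp{G})$ and all $M\in\mathcal{M}(G)$''.

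Steps~2--3 contain an error, although they are unnecessary. The walk $b\,W^{-1}\,u\,v\,u\,W\,b$ uses the matched edge $uv$ twice in succession at $v$, so it is not $M$-alternating. Turning around at $v$ would need an unmatched edge at $v$, about which nothing is known. The detour is not needed, because the vertex set of an $M$-Jposy is closed under $M$:
\begin{itemize}
\item non-base vertices of a blossom are matched inside it;
\item the bases are covered by the matched first and last edges of the connecting walk;
\item each interior occurrence of a vertex on an alternating walk meets a matched walk edge.
\end{itemize}
So once $u$ lies in an $M$-Jposy and $uv\in M$, $v$ lies in it too.

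What is actually missing, in the matchable case, is a direct argument that $v$ is an SD-vertex. For example, if $uv\in M'$, closure finishes the proof. Otherwise $uv$ lies on an even $(M,M')$-alternating cycle $C$, and you can build two closed walks:
\begin{itemize}
\item At $v$: run along $C$ from $v$ to $u$ (arriving via $M'(u)u$), insert an $M'$-$nn$ closed walk at $u$ (from the lemma of \cite{kevin2025posy}), and return along $C$. This gives an $M'$-$mm$ closed walk at $v$.
\item At $M'(v)$: take $M'(v)\,v\,u$, then an $M'$-$mm$ closed walk at $u$, then $u\,v\,M'(v)$.
\end{itemize}
You would still have to show that these two $mm$ closed walks combine into an $M'$-Jposy containing $v$. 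The non-matchable case needs a separate argument.
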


\begin{corollary}[\cite{jaumeSDKE2025}]\label{mu}
	Let $G$ be a graph. Then $\mu(G) = \mu(\fp{G}) + \mu(\ke{G})$.
\end{corollary}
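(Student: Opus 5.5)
The statement to prove is \cref{mu}: $\mu(G)=\mu(\fp{G})+\mu(\ke{G})$. The inequality $\mu(G)\ge \mu(\fp{G})+\mu(\ke{G})$ is immediate. Since $\fp{G}$ and $\ke{G}$ are vertex-disjoint subgraphs of $G$, the union of a maximum matching of $\fp{G}$ and a maximum matching of $\ke{G}$ is a matching of $G$. So all the work is in the reverse inequality.

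For $\le$, fix a maximum matching $M$ of $G$ and split it as
\[
M = M_{SD}\cup M_{KE}\cup M_{\times},
\]
where $M_{SD}$ consists of the edges with both ends in $V_{SD}(G)$, $M_{KE}$ of the edges with both ends in $V_{KE}(G)$, and $M_{\times}$ of the edges with one end in each part. By \cref{teo_nomathcingedge}, an edge $uv$ with $u\in V(\fp{G})$ and $v\in V(\ke{G})$ lies in no maximum matching of $G$. Hence $M_{\times}=\emptyset$.

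Next, $M_{SD}$ is a matching of $\fp{G}=G[V_{SD}(G)]$, because that subgraph is induced and therefore contains every edge of $G$ with both ends in $V_{SD}(G)$. For the KE side, $\ke{G}=G-\fp{G}$ is obtained by deleting the SD vertices, and the remaining graph on $V_{KE}(G)$ keeps all edges of $G$ between its vertices. So $M_{KE}$ is a matching of $\ke{G}$. It follows that
\[
\mu(G)=|M|=|M_{SD}|+|M_{KE}|\le \mu(\fp{G})+\mu(\ke{G}).
\]

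The only substantive ingredient is \cref{teo_nomathcingedge}, which excludes crossing matched edges. If that result were not available, one would have to prove directly that a crossing edge $uv\in M$, with $u$ on an $M'$-Jposy, produces a contradiction with $v\in V_{KE}(G)$. The natural way to do this would be to swap along $M\triangle M'$ so that $u$ sits on an $M$-Jposy, and then extend an $M$-$mm$-alternating walk through $uv$ to place $v$ on an $M$-Jposy. That step would be the real obstacle; here it is settled by citing \cref{teo_nomathcingedge}.
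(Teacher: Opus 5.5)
Your proof is correct and follows the paper's route: the paper lists this result, cited from \cite{jaumeSDKE2025}, as a corollary of Theorem~\ref{teo_nomathcingedge}. No maximum matching uses an edge between $V_{SD}(G)$ and $V_{KE}(G)$, so every maximum matching splits into a matching of $\fp{G}$ and one of $\ke{G}$, while the other inequality is the trivial union of matchings of vertex-disjoint subgraphs; your closing paragraph is not needed, and as a sketch it would in any case rely on the Reachability Lemma, which the paper states only for matchable graphs.
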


Thus, the previous results gives us an algorithm that, given a matchable graph $G$, can separate $\ke{G}$ from  $\fp{G}$.

\title{KE-P Algorithm for Graphs with Perfect Matching}

\begin{algorithm}[H]
	\DontPrintSemicolon
	\KwIn{Graph $G = (V,E)$ and a perfect matching $M$ of \(G\)}
	\KwOut{Sets $KE$ and $SD$, the SD-KE partition of $G$}
	\;
	Initialization: \;
	
	$KE \gets \emptyset$ \tcp*{Initial KE set}
	\;
	\While{$V\neq \emptyset$}{
		Chose \(v\in V\)\;
		\If{$\exists$ $M$-$mm$-alternating closed walk starting at $v$}{
			$V \gets V -v $ \tcp*{Remove the checked vertices}
		}\;
		\Else{
			$KE \gets KE \cup \{v, M(v)\}$ \tcp*{Add vertex and its partner}\;
			$V \gets V -\{v, M(v)\} $
		}
	}
	$SD \gets V(G) \setminus KE$ \tcp*{Define SD set}
	\Return $(SD, KE)$ \tcp*{Return the SD-KE partition of \(G\)}
	\caption{SD-KE separation algorithm}
\end{algorithm}

%

\begin{lemma}\label[lemma]{Lem_G_e_KE}
	Let $G$ be a graph, and let $e\in E(\ke{G})$. Then:
	$$V_{SD}(G) \subset V_{SD}(G-e).$$
	Even more, if there exists \(M \in \mathcal{M}(G)\) such that $e\notin M$, then  
	\[
	V_{SD}(G) = V_{SD}(G-e).
	\]
\end{lemma}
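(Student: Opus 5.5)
We work directly from the definition of $V_{SD}$: a vertex lies in $V_{SD}(G)$ exactly when it lies on some $M$-Jposy for some maximum matching $M$ of $G$. The key point is that an edge $e$ with both endpoints in $\ke{G}$ can never be used by such a configuration.

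\textbf{Claim.} Let $M$ be a maximum matching of $G$ and $H$ an $M$-Jposy of $G$. Then every vertex of $H$ lies in $V_{SD}(G)$, so $V(H)\subseteq V_{SD}(G)$. Since $e=xy$ has $x,y\in V(\ke{G})$, the edge $e$ is not an edge of $H$.

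Consequently $H$ is a subgraph of $G-e$.

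\textbf{Matching number.} We need $M\setminus\{e\}$ to be a maximum matching of $G-e$. If $e\notin M$, then $M$ is a matching of $G-e$ and $\mu(G-e)\le\mu(G)=|M|$, so $M$ is maximum in $G-e$.

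If $e\in M$, then $e$ is not an edge of $H$, so $H$ uses only edges of $M'=M\setminus\{e\}$ together with non-matching edges. The worry is whether $M'$ is maximum in $G-e$.

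\begin{itemize}
\item If some maximum matching $N$ of $G$ avoids $e$, then $N$ is a matching of $G-e$ of size $\mu(G)$. In that case, rather than using $M$, we transfer to $N$: in $G$, every vertex of $V_{SD}(G)$ lies on an $N$-Jposy, by the general-graph analogue of \cref{asd12}. For matchable $G$, \cref{asd12} gives this directly. That $N$-Jposy avoids $e$ by the Claim, so it is an $N$-Jposy of $G-e$, and $N$ is maximum in $G-e$.
\item If every maximum matching of $G$ contains $e$, then $\mu(G-e)=\mu(G)-1$. Hence $M'$ is maximum in $G-e$, since $|M'|=\mu(G)-1$. The walk and blossoms of $H$ are $M'$-alternating, because $x$ and $y$ do not lie on $H$. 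So $H$ is an $M'$-Jposy of $G-e$.
\end{itemize}

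Either way, every vertex of $V_{SD}(G)$ lies on a Jposy of $G-e$ relative to a maximum matching of $G-e$, which proves $V_{SD}(G)\subseteq V_{SD}(G-e)$.

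\textbf{Equality when $e\notin M$ for some $M$.} Take $M\in\mathcal{M}(G)$ with $e\notin M$. Then $M$ is maximum in $G-e$, and $\mu(G-e)=\mu(G)$. Let $H$ be an $N$-Jposy in $G-e$ for some maximum matching $N$ of $G-e$. Then $N$ is also a maximum matching of $G$, since the matching numbers agree, and $H\le G$ is an $N$-Jposy of $G$. Hence $V(H)\subseteq V_{SD}(G)$, which gives $V_{SD}(G-e)\subseteq V_{SD}(G)$.

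\textbf{Main obstacle.} The delicate step is the first case, where we must replace the original $M$ by a matching avoiding $e$. This needs the independence of the Jposy vertex set from the choice of maximum matching. For matchable graphs this is \cref{asd12}; for general graphs I would cite the corresponding result of \cite{jaumeSDKE2025}, \cite{kevin2025posy}, or else adapt the Reachability Lemma argument using symmetric differences of maximum matchings.
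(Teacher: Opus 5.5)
Your argument is correct when $G$ is matchable, which is the standing assumption of Section~3 and the only case used later (via \cref{coro_G_e_KE} in the proof of \cref{Main_result}). In the key case, however, it takes a different route from the paper. When $e\in M$ and some maximum matching $N$ avoids $e$, you discard the given $M$-Jposy and use \cref{asd12}, hence the Reachability Lemma, to produce a new $N$-Jposy through each SD-vertex. The paper never changes the configuration. Since no maximum matching uses an edge of the SD-KE cut (\cref{teo_nomathcingedge}) and $\mu(G)=\mu(\fp{G})+\mu(\ke{G})$ (\cref{mu}), the set $(M\cap E(\fp{G}))\cup(N\cap E(\ke{G}))$ is a maximum matching of $G$. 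It avoids $e$ and coincides with $M$ on $\fp{G}$, so the original $M$-configuration is, unchanged, a configuration of $G-e$ for it. Your version gives a uniform witness: any perfect matching avoiding $e$ works for all SD-vertices at once. The paper's splicing is purely local and never uses that the matching is perfect.

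That last point matters because the lemma is stated for an arbitrary graph, and the paper's proof is built to cover that. Even for matchable $G$, the case where every maximum matching contains $e$ already makes $G-e$ non-matchable. The general-graph analogue of \cref{asd12} that you propose to cite is false. For non-perfect maximum matchings, the set of vertices lying on $M$-configurations depends on $M$, just as $\Rea{M}{v}$ does for an odd cycle.

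For example, let $G$ be a triangle $abc$ with a pendant path $a\,d\,p$, plus a disjoint $4$-cycle $wxyz$, and let $e=wx$. Under the general decomposition of \cite{jaumeSDKE2025}, flower vertices are SD-vertices. Take $M=\{bc,ad,wx,yz\}$. Then $p$ is unsaturated and $p\,d\,a$ is the stem of a flower on the blossom $abc$, so $p$ is an SD-vertex, and the $4$-cycle is the KE-part. Now take $N=\{ab,dp,xy,zw\}$, a maximum matching avoiding $e$. Under $N$, $p$ is saturated, has degree one and lies on no cycle, so no $N$-alternating walk passes through it. It is neither unsaturated nor a base, so it lies on no $N$-configuration.

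So an arbitrary $N$ avoiding $e$ does not suffice; $N$ must agree with $M$ on $\fp{G}$, which is exactly the paper's splice. Replacing your first sub-case by that construction makes your proof valid for arbitrary graphs.
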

\begin{proof}
	If there exists \(M \in \mathcal{M}(G)\) such that $e\notin M$, then  \(\mathcal{M}(G-e)\subset \mathcal{M}(G)\). Therefore a vertex in a configuration of \(G-e\) is also a vertex in a configuration of \(G\). Hence, $V_{SD}(G-e) \subset V_{SD}(G)$. If \(v\) is an \(M\)-configuration of \(G\), by \cref{teo_nomathcingedge}, the matching \(M\cap E(\fp{G})\) can be extended to a maximum matching of \(G\) that do no use \(e\). Therefore, \(v\in V_{SD}(G-e)\). Hence,  \(V_{SD}(G) = V_{SD}(G-e)\).

	If $e\in \bigcap_{M\in \mathcal{M}} M$, then \(\mu(G-e)=\mu(G)-1\) 
	and \(M-e\) is a maximum matching of \(G-e\). Any configuration in \(G\) associated to \(M\) is a  configuration in \(G-e\) associated to \(M-e\). Therefore, $V_{SD}(G) \subset V_{SD}(G-e)$. This inclusion can be strict as the \cref{fig_kevin_1} shows.	 
\end{proof}

\begin{corollary}\label[corollary]{coro_G_e_KE}
	Let $G$ be a graph, and let $e\in E(\ke{G})$. If there exists \(M \in \mathcal{M}(G)\) such that $e\notin M$, then 
	\begin{align*} 
	\ke{G-e}&=\ke{G},\\
	\fp{G-e}&=\fp{G}.
	\end{align*}
\end{corollary}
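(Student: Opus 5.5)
This is essentially a direct consequence of \cref{Lem_G_e_KE}, so the plan is to deduce both equalities from it, using the hypothesis that some $M \in \mathcal{M}(G)$ avoids $e$. Under that hypothesis the lemma gives
\[
V_{SD}(G) = V_{SD}(G-e).
\]
Since $V(G-e)=V(G)$ (only an edge is deleted), the complements also agree:
\[
V_{KE}(G-e)=V(G)\setminus V_{SD}(G-e)=V(G)\setminus V_{SD}(G)=V_{KE}(G).
\]
So the SD-KE partitions of $G$ and $G-e$ coincide as vertex partitions. What remains is to compare the induced subgraphs, i.e.\ the edge sets.

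\textbf{The SD-part.} We have $\fp{G-e}=(G-e)[V_{SD}(G)]$ and $\fp{G}=G[V_{SD}(G)]$. These differ only if $e$ has both endpoints in $V_{SD}(G)$. But $e\in E(\ke{G})$, so both endpoints lie in $V_{KE}(G)$, which is disjoint from $V_{SD}(G)$. Hence $e$ is not an edge of $G[V_{SD}(G)]$, and the two induced subgraphs are equal: $\fp{G-e}=\fp{G}$.

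\textbf{The KE-part.} Read the convention $\ke{G}=G-\fp{G}$ as the subgraph induced on $V_{KE}(G)$; this is how $e\in E(\ke{G})$ is meaningful. Then $\ke{G-e}=(G-e)[V_{KE}(G)]$, which equals $\ke{G}-e$. So it coincides with $\ke{G}$ as a vertex set and as the KE-side of the partition, but as a literal graph it has lost the edge $e$.

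The only real obstacle is this bookkeeping point about what equality of the KE-parts means, since the deleted edge lives inside the KE-part. I would state explicitly that the equality $\ke{G-e}=\ke{G}$ is meant at the level of the SD-KE partition, i.e.\ $V_{KE}(G-e)=V_{KE}(G)$, equivalently $\ke{G-e}=\ke{G}-e$. The substantive content, that deleting a non-forced KE-edge creates or destroys no Jposy vertex, is already supplied by \cref{Lem_G_e_KE}, whose proof relies on \cref{teo_nomathcingedge} to extend $M\cap E(\fp{G})$ to a maximum matching avoiding $e$.
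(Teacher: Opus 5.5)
Your proposal is correct and follows the paper's route. The paper gives no separate argument and treats the corollary as immediate from \cref{Lem_G_e_KE}: the equality $V_{SD}(G-e)=V_{SD}(G)$ forces equality of the SD-KE vertex partitions, and the SD-parts coincide as induced subgraphs because both ends of $e$ lie in $V_{KE}(G)$.

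Your caveat is also right. Since $e\in E(\ke{G})$ but $e\notin E(G-e)$, the KE-equality can only hold at the level of vertex sets, $V_{KE}(G-e)=V_{KE}(G)$, i.e.\ $\ke{G-e}=\ke{G}-e$ as graphs. This vertex-level statement is exactly what the proof of \cref{Main_result} needs, namely that the edge $uv$ still lies in $\partial_{SD\text{-}KE}(G-e)$.
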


\begin{figure}[H]
	\begin{subfigure}[b]{0.5\textwidth}
	\centering
	
	\begin{tikzpicture}[commutative diagrams/every diagram,thick,scale=0.7] 
		
		
		\node[draw,black,fill,circle,scale=0.5,label=below:\small{$1$}] (1) at (-2,0) { };
		\node[draw,black,fill,circle,scale=0.5,label=left:\small{$2$}] (2) at (-2,2) { };
		\node[draw,black,fill,circle,scale=0.5,label=below:\small{$3$}] (3) at (0,1) { };
		\node[draw,black,fill,circle,scale=0.5,label=below:\small{$4$}] (4) at (2,3) { };
		\node[draw,black,fill,circle,scale=0.5,label=above:\small{$5$}] (5) at (4,3) { };
		\node[draw,black,fill,circle,scale=0.5,label=right:\small{$6$}] (6) at (3,2) { };
		\node[draw,black,fill,circle,scale=0.5,label=right:\small{$7$}] (7) at (3,0) { };
		\node[draw,black,fill,circle,scale=0.5,label=above:\small{$8$}] (8) at (2,-1) { };
		\node[draw,black,fill,circle,scale=0.5,label=above:\small{$9$}] (9) at (4,-1) { };
		
		\foreach \from/\to in {3/2,3/1,3/8,8/7,4/5,4/6} {
			\path[draw] (\from) -- (\to);}
		
		\foreach \from/\to in {1/2,3/4,6/7,8/9} {
			\path [draw, thick, decorate, decoration={snake,amplitude=4,segment length=1.5mm, amplitude=2}, very thick, red] (\from) -- (\to);
		}
	\end{tikzpicture}
	\caption{\(V_{KE}(G)=\{6,7\}.\)}
	\end{subfigure}	
	%
	%
	%
	\begin{subfigure}[b]{0.5\textwidth}
		\centering
		
		\begin{tikzpicture}[commutative diagrams/every diagram,thick,scale=0.7] 
			
			
			\node[draw,black,fill,circle,scale=0.5,label=below:\small{$1$}] (1) at (-2,0) { };
			\node[draw,black,fill,circle,scale=0.5,label=left:\small{$2$}] (2) at (-2,2) { };
			\node[draw,black,fill,circle,scale=0.5,label=below:\small{$3$}] (3) at (0,1) { };
			\node[draw,black,fill,circle,scale=0.5,label=below:\small{$4$}] (4) at (2,3) { };
			\node[draw,black,fill,circle,scale=0.5,label=above:\small{$5$}] (5) at (4,3) { };
			\node[draw,black,fill,circle,scale=0.5,label=right:\small{$6$}] (6) at (3,2) { };
			\node[draw,black,fill,circle,scale=0.5,label=right:\small{$7$}] (7) at (3,0) { };
			\node[draw,black,fill,circle,scale=0.5,label=above:\small{$8$}] (8) at (2,-1) { };
			\node[draw,black,fill,circle,scale=0.5,label=above:\small{$9$}] (9) at (4,-1) { };
			
			\foreach \from/\to in {3/2,3/1,3/8,8/7,4/5,4/6} {
				\path[draw] (\from) -- (\to);}
			
			\foreach \from/\to in {1/2,3/4,8/9} {
				\path [draw, decorate, decoration={snake,amplitude=4,segment length=1.5mm, amplitude=2}, very thick, red] (\from) -- (\to);
			}
		\end{tikzpicture}
		\caption{\(V_{KE}(G-\{6,7\})=\emptyset.\)}
	\end{subfigure}

	\caption{Stability under edge deletion.}\label{fig_kevin_1}
\end{figure}
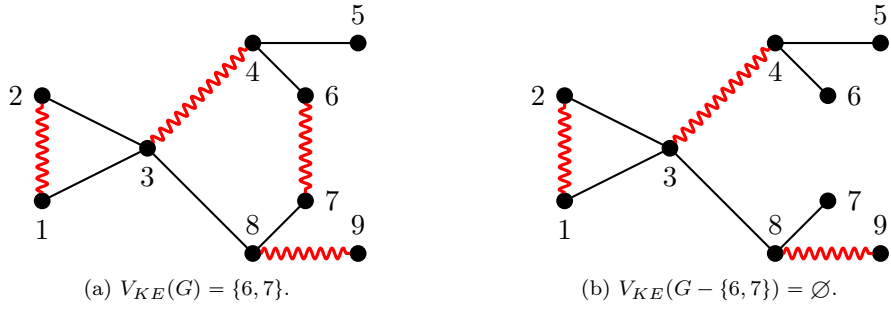
%
%
%
%
%
%

\section{Main result}\label{Main}

Let $G$ be a matchable graph. We define the \emph{SD-KE cut} of \(G\) as
\[
\partial_{SD\text{-}KE}(G):=\partial(\fp{G}) = E(G) - (E(\ke{G}) \cup E(\fp{G})).
\] 
The SD-KE cut of a graph is disjoint from any of its Sachs subgraphs.
\begin{theorem}\label{Main_result}
	If $G$ is an matchable graph and $S$ is a Sachs subgraph of $G$, then 
	\[
	\partial_{SD\text{-}KE}(G) \cap E(S) = \emptyset .
	\]
\end{theorem}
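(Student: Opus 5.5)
We argue by contradiction. Suppose $S\in\Sachs{G}$ contains an edge $uv\in\partial_{SD\text{-}KE}(G)$, with $u\in V(\fp{G})$ and $v\in V(\ke{G})$. The plan is to turn the component of $S$ through $uv$ into a perfect matching of $G$, and then use \cref{teo_nomathcingedge} or \cref{vwalk} to reach a contradiction. Let $C$ be the component of $S$ containing $uv$. There are two cases.

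\emph{Case 1: $C\cong K_2$, or $C$ is an even cycle.} In this case we build a perfect matching $M$ of $G$ with $uv\in M$. For each $K_2$-component of $S$, take its edge. For each even cycle of $S$, take one of its two perfect matchings; for $C$, choose the one containing $uv$. If $S$ has no odd cycles, the result is a perfect matching of $G$ containing $uv$, which is a maximum matching. This contradicts \cref{teo_nomathcingedge}.

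\emph{Case 2: odd cycles.} In general $S$ may contain odd cycles, and then the construction above does not give a matching of $G$. Since $G$ is matchable, however, it has a perfect matching $M_0$. The local idea is: if $C$ is an odd cycle through $u$ and $v$, use $M_0$ together with $C$ to produce alternating structures.

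A cleaner route, which avoids building a matching, is to show directly that $v\in V(\fp{G})$. Fix any $M\in\mathcal{M}(G)$. By \cref{vwalk} there is an $M$-$mm$-alternating closed walk $W$ at $u$. The goal is to combine $W$ with the Sachs component $C$ to produce an $M$-$mm$-alternating closed walk at $v$. Such a walk contains an odd closed $mm$-alternating structure, hence an $M$-Jposy through $v$ in the sense of \cref{asd12}. This gives $v\in V(\fp{G})$, a contradiction.

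\emph{Main step.} The core is to show that $R(M,u)\cap V(C)$ propagates along $C$. Walk around $C$ starting from $u$. At each step, either the next edge of $C$ lies in $M$, or the next vertex is already matched by $M$ into a vertex we have reached. In both cases the reachable set containing $u$ extends by one vertex along $C$. This is the same neighbor-extension argument used in the proof that $\Rea{M}{v}=V(G)$ for SD-graphs: append the non-matching edge $xy$ to an $mm$-walk ending at $y$, then close it using the walk at $x$.

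The delicate point is that the proof of that theorem used \cref{vwalk} at the new vertex $x$, which is only available if $x$ is already SD. So this step must be replaced by a parity argument. Consider the symmetric difference of $M$ with $E(C)$, together with the remaining components of $S$. Since $C$ covers its vertices via a Sachs structure, $M$ restricted to $V(C)$ either matches $V(C)$ internally or leaves $V(C)$ through edges. Using the Reachability Lemma (\cref{asd1}), we may replace $M$ by a perfect matching that agrees with $S$ on as many components as possible. When $C$ is odd, this forces $M$ to match exactly one vertex of $C$ outside $C$.

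From this configuration, $C$ together with that outside edge forms an $M$-blossom. A blossom containing $u$ and $v$ with base joined to an $mm$-walk from $u$ yields an $M$-Jposy containing $v$.

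I expect the main obstacle to be Case 2: the odd-cycle components of $S$, and in particular justifying the choice of $M$ so that $C$ becomes a blossom. If that choice fails, my fallback is a counting argument. Using \cref{mu}, $\fp{G}$ and $\ke{G}$ are each matchable with $|V(\fp{G})|$ even. A Sachs subgraph using cut edges must use an even number of them, as the cut of a spanning 2-factor-like structure. One then argues, with the odd cycles, that some cut edge can be forced into a perfect matching, which contradicts \cref{teo_nomathcingedge}.
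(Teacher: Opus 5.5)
\textbf{There is a genuine gap.} Apart from the special case in which $S$ has no odd cycle, the proposal contains no argument.

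\emph{Case~1.} It is correct only under that extra assumption, and it does not extend. Once $S$ has odd cycles, an edge of a $K_2$- or even-cycle component of $S$ need not lie in any perfect matching. Take the graph formed by triangles $a_1a_2a_3$, $b_1b_2b_3$, an edge $pq$, and the edges $pa_1,qb_1$. Then $pq$ is a component of the Sachs subgraph consisting of the two triangles and $pq$, yet the unique perfect matching contains $pa_1,qb_1$. So \cref{teo_nomathcingedge} cannot be invoked this way, and everything rests on Case~2.

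\emph{Case~2.} The dichotomy in your main step is false: the next vertex of $C$ may be matched to a vertex off $C$ that has not been reached. The extension step is also circular, as you notice yourself. Your proposed repair is unfounded. \cref{asd1} only says that reachable sets do not depend on $M$; it gives no control over how a perfect matching meets $V(C)$. In fact an odd Sachs cycle may have three vertices matched outside it by every perfect matching. Take a $5$-cycle $c_1\cdots c_5$ and three triangles attached to $c_1,c_2,c_3$ by single edges $c_it_i$. The $5$-cycle and the triangles form a Sachs subgraph, but every perfect matching contains $c_1t_1,c_2t_2,c_3t_3$. Even if $C$ were an $M$-blossom, leaving it through the $M$-edge at its base and returning alternately to $v$ is exactly what has to be proved. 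The fallback is not an argument either. A $K_2$-component crossing the cut contributes a single cut edge, so the parity claim is unjustified, and no mechanism is given that forces a cut edge into a perfect matching.

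\emph{What is missing.} You need a device that follows $M$ and $S$ simultaneously through several components of $S$, plus an induction that disposes of the configurations in which no closed walk appears. The paper takes an edge-minimal counterexample. Starting at the KE endpoint (your $v$), it takes a longest path $v=x_0x_1\cdots x_k$ alternating between $M$-edges $x_{2i}x_{2i+1}$ and $S$-edges $x_{2i+1}x_{2i+2}$. By maximality, the $S$-neighbours of $x_k$ lie on the path.

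If some $S$-neighbour $x_j\neq x_{k-1}$ has $j$ odd, then $x_0\cdots x_kx_jx_{j-1}\cdots x_0$ is an $M$-$mm$-alternating closed walk at $v$. Together with $uv$ and the closed walk at $u$ from \cref{vwalk}, it yields an $M$-Jposy through $v$, a contradiction. This is precisely the walk your ``cleaner route'' was after.

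Otherwise $j$ is even, and $x_jx_{j+1}\cdots x_kx_j$ is an even $M$-alternating cycle. So the $M$-edge $e_1=x_jx_{j+1}$ is avoided by some perfect matching. It is not in $S$, because $x_j$ already carries the two $S$-edges $x_jx_k$ and $x_{j-1}x_j$ (or $vu$ if $j=0$). By \cref{coro_G_e_KE}, $G-e_1$ has the same SD-KE decomposition and still contains $S$, contradicting minimality.

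Applying \cref{coro_G_e_KE} also requires the path to stay inside $\ke{G}$. This holds: by \cref{teo_nomathcingedge}, its first edge into $\fp{G}$ would be an $S$-edge. Going out to it, around the closed walk given by \cref{vwalk}, and back would again produce an $mm$-alternating closed walk at $v$.
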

\begin{proof}
	Let \(G\) be an edge-minimal counterexample. Therefore there exist  a Sachs Subgraph \(S\) of \(G\) such that there exists an edge $uv \in \partial_{SD\text{-}KE}(G) \cap E(S)$ such that $u \in V(\ke{G})$ and $v \in V(\fp{G})$.
	
	Fix a perfect matching \(M\) of \(G\). By \cref{vwalk}, for every \(M\in \mathcal{M}(G)\) there exists \(W=vWv\), an \(M\)-\(mm\)-alternating closed walk starting in \(v\), we call it a semi-\(M\)-Jposy of \(v\).
	
	Let $uP$ be an \(MS\)-alternating path of maximal length starting at $u$.
	
	Let rewrite $uP$ as 
	\[
	uP = (u = u_{0})u_{1} \cdots (u_{k}=w).
	\]
	If $\deg(u_{k}) = 1$, then $u_{k-1} = M(u_{k})$, and thus $u_{k-1}u_{k}$ is an edge component of every Sachs subgraph of $G$ and belongs to every perfect matching of $G$. Moreover, since $uP$ is an $MS$-alternating path, the edge $u_{k-2}u_{k-1}$ is a Sachs edge. Consequently, $u_{k-2}, u_{k-1},$ and $u_{k}$ would all lie on a cycle component of any Sachs subgraph of $G$, which is absurd.
	
	Since \(uP\) is maximal, \(N_{S}=\{u_{j_{1}},\dots ,u_{j_{h}}\}\subset V(uP)\). If one of these indexes is odd, say \(j_{\ell}\) (note that \(k\) must be odd, so \(j_{\ell}\neq k-1\)), then \(uPwu_{j_{\ell}}P^{-1}u\) is an  \(M\)-\(mm\)-alternating closed walk starting in \(u\), i.e., a semi-\(M\)-Jposy of \(u\), see \cref{subcasei}. The semi-\(M\)-Jposy of \(w\), the edge \(uv\), and the semi-\(M\)-Jposy of \(u\), form an \(M\)-Jposy that contains \(u\), which is impossible.
	
	Assume that all the indices \(\{j_{1}, \dots,j_{h}\}\) are even. Then \(u_{j_{1}}Pwu_{j_{1}}\) is and even \(M\)-alternating cycle \(C\), see \cref{subcaseii}. Note that \(e_{1}=u_{j_{1}}u_{j_{1}+1}\) is a matching edge of \(uP\). Therefore \(M\triangle E(C)\) is a perfect matching of \(G\) that do not uses the edge  \(e_{1}\). By \cref{coro_G_e_KE}, \(\ke{G-e}=\ke{G}\). 
	
	 As the edges \(wu_{j_{1}}\) and \(u_{j_{1}}u_{j_{1}-1}\) are edges of the Sachs subgraph \(S\), the edge \(e_{1}\) is not an edge of \(S\). Hence, \(S\) is also a Sachs subgraph of \(G-e\). By minimality of \(G\),  \(uv\in \partial_{SD\text{-}KE}(G) \cap E(S) = \emptyset\) which is absurd.
\end{proof}

\begin{figure}[H]
	\centering
	\begin{subfigure}[b]{0.49\textwidth}
		\centering
		\begin{tikzpicture}[scale=0.3,commutative diagrams/every diagram]
			\node[draw,black,fill,circle,scale=0.3,label=above:\small{$u$}] (1) at (3.5,1)  {};
			\node[draw,black,fill,circle,scale=0.3] (2) at (2,0)  {};
			\node[] (3) at (0, -0.7)  {};
			\node[draw,black,fill,circle,scale=0.3,label=above:\small{$v$}] (5) at (6,4.5)  {};
			\node[draw,black,fill,circle,scale=0.3] (6) at (7.5,5)  {};
			\node[] (7) at (8 , 7)  {};	
			\node[draw,black,fill,circle,scale=0.3] (8) at (0,1)  {};
			\node[draw,black,fill,circle,scale=0.3] (9) at (-1,0)  {};
			\node[draw,black,fill,circle,scale=0.3] (10) at (-1,-1.5)  {};
			\node[draw,black,fill,circle,scale=0.3] (11) at (0.5,-1.5)  {};
			
			
			\foreach \from/\to in {2/1,5/6,8/9,11/10} {
				\path [draw, decorate, decoration={snake,amplitude=4,segment length=1.5mm, amplitude=2}, very thick, red] (\from) -- (\to);
			}
			
			\foreach \from/\to in {1/5,9/10,9/11}{
				\path[draw] (\from) -- (\to);
			}
			
			\foreach \from/\to in {6/7,2/8}{
				\draw[very thick, dashed] (\from) -- (\to);
			}
			
			\draw (1.5,0) circle (3.5);
			\node[above] at (0,3.3) {$\ke{G}$};
			\draw (6,6) circle (3);
			\node[above right] at (3.3,8.7) {$\fp{G}$};
		\end{tikzpicture}
		\caption{Semi-Jposy in \(\ke{G}\).}\label{subcasei}
	\end{subfigure}
	\hfill
	\begin{subfigure}[b]{0.49\textwidth}
		\centering
		\begin{tikzpicture}[scale=0.4,commutative diagrams/every diagram]
			\node[draw,black,fill,circle,scale=0.3,label=above:\small{$v$}] (0) at (10,5.5-2) { };
			\node[draw,black,fill,circle,scale=0.3,label=below left:\small{$u$}] (1) at (5.5,5.5-2) { };
			\node[draw,black,fill,circle,scale=0.3] (2) at (4,7-2) { };
			\node[draw,black,fill,circle,scale=0.3,label={[xshift=-2,yshift=-1pt]right:\scriptsize{$u_{j_{1}}$}}] (3) at (3,6.0-2) { };
			\node[draw,black,fill,circle,scale=0.3] (4) at (1.0,6.0-2) { };
			\node[draw,black,fill,circle,scale=0.3] (5) at (0.0,4.5-2) { };
			\node[draw,black,fill,circle,scale=0.3] (6) at (1.5,4.5-2) { };
			\node[draw,black,fill,circle,scale=0.3] (7) at (2,8-2) { };
			\node[] (8) at (12,5.5-2) { };
			\node[] (9) at (11,7.5-2) {};
			
			\foreach \from/\to in { {1/0}, {4/3}, {5/6}, {7/3}} {
				\path[draw] (\from) -- (\to);}
			
			\foreach \from/\to in { {4/5}, {3/6}, {1/2}} {
				\path [draw, decorate, decoration={snake,amplitude=4,segment length=1.5mm, amplitude=2}, very thick, red] (\from) -- (\to);
				}
			
			\foreach \from/\to in {2/7}{
				\draw[very thick, dashed] (\from) -- (\to);
			}
			
			\draw (2.5,6.5-2) circle (3.5);
			\node[above] at (2.5,10-2) {$\ke{G}$};
			
			\draw (10.5,6.5-2) circle (2.5);
			\node[above right] at (8,9-2) {$\fp{G}$};
			
			\node[label={[yshift=-1pt]above:\scriptsize{$e$}}] (12) at (7.5,5-2) { };
			\node[label={right:\scriptsize{$e_1$}}] (13) at (1.8,4.9-2) { };
		\end{tikzpicture}
		\caption{Even cycle case.}\label{subcaseii}
	\end{subfigure}
	\caption{Cases of proof of \cref{Main_result}}
	\label{combined}
\end{figure}

By \cref{Main_result} we have the following result.

\begin{corollary}
	Let G be a matchable graph, then
	\begin{align*}
	\det(G) & =\det(\fp{G})  \, \det(\ke{G}),\\
	\perm{G}& =\perm{\fp{G}} \, \perm{\ke{G}}.
	\end{align*}
\end{corollary}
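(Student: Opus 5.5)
The plan is to use the Sachs-subgraph expansion of Harary's theorem together with \cref{Main_result}. The goal is to show that restriction gives a bijection
\[
\Sachs{G}\longrightarrow \Sachs{\fp{G}}\times\Sachs{\ke{G}},\qquad S\longmapsto \bigl(S[V_{SD}(G)],\,S[V_{KE}(G)]\bigr),
\]
under which the statistics $c(S)$ and $\kappa_e(S)$ are additive. Once this is in place, both factorizations follow by expanding the products of the two sums and regrouping term by term.

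\textbf{Step 1: $S\mapsto(S_1,S_2)$ is well defined.} Let $S\in\Sachs{G}$. By \cref{Main_result}, no edge of $S$ lies in $\partial_{SD\text{-}KE}(G)$. Every edge of $S$ therefore has both ends in $V_{SD}(G)$ or both ends in $V_{KE}(G)$. Each component of $S$ is connected, so it lies entirely inside one of the two parts. Consequently $S_1=S[V_{SD}(G)]$ is a spanning subgraph of $\fp{G}$ whose components are $K_2$'s and cycles, and the same holds for $S_2$ in $\ke{G}$. Since $\fp{G}$ and $\ke{G}$ are induced subgraphs, the edges of $S_i$ are indeed edges of the respective part. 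Moreover, the components of $S$ are exactly the components of $S_1$ together with those of $S_2$. This gives $c(S)=c(S_1)+c(S_2)$ and $\kappa_e(S)=\kappa_e(S_1)+\kappa_e(S_2)$. I read the exponent in Harary's formula as $\kappa_e(S)$.

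\textbf{Step 2: the map is a bijection.} Injectivity is immediate, because $S=S_1\cup S_2$. For surjectivity, take any $S_1\in\Sachs{\fp{G}}$ and $S_2\in\Sachs{\ke{G}}$. Their disjoint union is a spanning subgraph of $G$ whose components are $K_2$'s and cycles, so it belongs to $\Sachs{G}$ and maps to $(S_1,S_2)$.

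\textbf{Step 3: handling empty sets.} If either part is empty, its determinant and permanent are $1$ by the empty-matrix convention. If either part has no Sachs subgraph, the corresponding sum is empty. Step 2 shows that this happens exactly when $\Sachs{G}=\emptyset$. This case does not arise anyway, because $G$ is matchable. Indeed, for $M\in\mathcal M(G)$, \cref{teo_nomathcingedge} shows that $M$ splits into perfect matchings of the two parts, so both parts are matchable and Harary's formula applies to each of them.

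\textbf{Step 4: summation.} Summing $(-1)^{\kappa_e(S)}2^{c(S)}$ over $S\in\Sachs{G}$ and using Step 2 with the additivity from Step 1, the sum factors as the product of the corresponding sums for $\fp{G}$ and $\ke{G}$. The permanent identity follows in the same way with all signs dropped.

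The only substantive input is \cref{Main_result}, which is already established. What needs care is showing that Sachs components cannot straddle the cut, and checking that both parts are matchable so that Harary's formula is applicable to each.
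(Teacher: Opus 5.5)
Your proposal is correct and takes the same route as the paper: the paper derives the corollary in one line from \cref{Main_result} via Harary's Sachs-subgraph expansion. You have made explicit what the paper leaves implicit, namely the bijection $\Sachs{G}\to\Sachs{\fp{G}}\times\Sachs{\ke{G}}$, the additivity of $c$ and $\kappa_e$ (reading the exponent as $\kappa_e(S)$ is the intended one), and the fact that both parts are matchable.
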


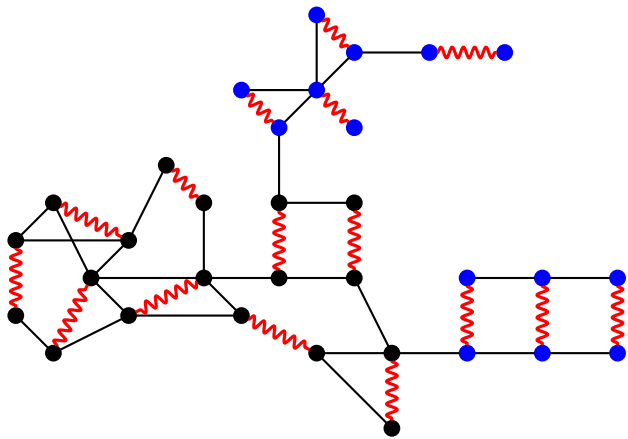
\begin{figure}[H]
	\centering
	\def \Vescala{1}
	\def \opacidad{0.2}
	\begin{tikzpicture}[commutative diagrams/every diagram,thick,scale=0.5] 
		
		\node[draw,black,fill,circle,scale=0.5] (0) at (-2,1) {};
		\node[draw,black,fill,circle,scale=0.5] (1) at (-4,2) {};
		\node[draw,black,fill,circle,scale=0.5] (2) at (-5,1) {};
		\node[draw,black,fill,circle,scale=0.5] (3) at (-5,-1) {};
		\node[draw,black,fill,circle,scale=0.5] (4) at (-4,-2) {};
		\node[draw,black,fill,circle,scale=0.5] (5) at (-3,0) {};
		\node[draw,black,fill,circle,scale=0.5] (6) at (-1,3) {};
		\node[draw,black,fill,circle,scale=0.5] (7) at (0,2) {};
		\node[draw,black,fill,circle,scale=0.5] (8) at (0,0) {};
		\node[draw,black,fill,circle,scale=0.5] (9) at (-2,-1) {};
		\node[draw,black,fill,circle,scale=0.5] (10) at (2,0) {};
		\node[draw,black,fill,circle,scale=0.5] (11) at (2,2) {};
		
		\node[draw,black,fill,circle,scale=0.5] (c) at (4,2) {};
		\node[draw,black,fill,circle,scale=0.5] (d) at (4,0) {};
		\node[draw,black,fill,circle,scale=0.5] (e) at (1,-1) {};
		\node[draw,black,fill,circle,scale=0.5] (f) at (3,-2) {};
		\node[draw,black,fill,circle,scale=0.5] (g) at (5,-2) {};
		\node[draw,black,fill,circle,scale=0.5] (d1) at (5,-4) {};
		\node[draw,blue,fill,circle,scale=0.5] (h) at (7,-2) {};
		\node[draw,blue,fill,circle,scale=0.5] (i) at (7,0) {};
		\node[draw,blue,fill,circle,scale=0.5] (j) at (9,0) {};
		\node[draw,blue,fill,circle,scale=0.5] (k) at (9,-2) {};
		\node[draw,blue,fill,circle,scale=0.5] (l) at (11,0) {};
		\node[draw,blue,fill,circle,scale=0.5] (m) at (11,-2) {};
		\node[draw,blue,fill,circle,scale=0.5] (r) at (2,4) {};
		\node[draw,blue,fill,circle,scale=0.5] (s) at (1,5) {};
		\node[draw,blue,fill,circle,scale=0.5] (t) at (3,5) {};
		\node[draw,blue,fill,circle,scale=0.5] (u) at (4,4) {};
		\node[draw,blue,fill,circle,scale=0.5] (v) at (3,7) {};
		\node[draw,blue,fill,circle,scale=0.5] (w) at (4,6) {};
		\node[draw,blue,fill,circle,scale=0.5] (z) at (6,6) {};
		\node[draw,blue,fill,circle,scale=0.5] (c1) at (8,6) {};
		
		\node[opacity=0] (phantom) at (0,-3) {};
		
		\foreach \from/\to in {1/2,1/5,3/4,0/6,7/8,9/5,9/4,0/5,5/8,2/0,10/8,8/e,9/e,11/c,10/d,d/g,f/g,f/d1,g/h,i/j,h/k,k/m,j/l,11/r,t/r,t/s,t/v,t/w,w/z} {
			\path[draw] (\from) -- (\to);
		}
		\foreach \from/\to in {2/3,4/5,1/0,6/7,8/9,10/11,e/f,c/d,g/d1,i/h,j/k,l/m,s/r,t/u,v/w,z/c1} {
			\path [draw, decorate, decoration={snake,amplitude=4,segment length=1.5mm, amplitude=2}, very thick, red] (\from) -- (\to);
		}
		
	\end{tikzpicture}
	\caption{A graph \(G\) with perfect matching. $\fp{G}$ and $\ke{G}$ are represented by black and blue vertices, respectively} \label{Exam_Det}
\end{figure}

For the graph $G$ in \cref{Exam_Det}, we have $\det(\ke{G}) = -1$ and $\det(\fp{G})= -5$. Therefore, $\det(G)=5$.

\section*{Acknowledgments}

This work was partially supported by Universidad Nacional de San Luis (Argentina), PROICO 03-0723, MATH AmSud, grant 22-MATH-02, Agencia I+D+i (Argentina), grants PICT-2020-Serie A-00549 and PICT-2021-CAT-II-00105, CONICET (Argentina) grant PIP 11220220100068CO.

\section*{Declaration of generative AI and AI-assisted technologies in the writing process}
During the preparation of this work the authors used ChatGPT-3.5 in order to improve the grammar of several paragraphs of the text. After using this service, the authors reviewed and edited the content as needed and take full responsibility for the content of the publication.

\section*{Data availability}

Data sharing not applicable to this article as no datasets were generated or analyzed during the current study.

\section*{Declarations}

\noindent\textbf{Conflict of interest} \ The authors declare that they have no conflict of interest.

\bibliographystyle{apalike}

\bibliography{TAGcitas_DPM}

\end{document}